\providecommand{\U}[1]{\protect\rule{.1in}{.1in}}
\newtheorem{theorem}{Theorem}[section]
\newtheorem{corollary}[theorem]{Corollary}
\newtheorem{definition}[theorem]{Definition}
\newtheorem{assumption}[theorem]{Assumption}
\newtheorem{example}[theorem]{Example}
\newtheorem{lemma}[theorem]{Lemma}
\newtheorem{proposition}[theorem]{Proposition}
\newtheorem{remark}[theorem]{Remark}
\newenvironment{proof}[1][Proof]{\noindent\textbf{#1.} }{\ \rule{0.5em}{0.5em}}
\numberwithin{equation}{section}
\begin{document}

\title{The Neyman-Pearson lemma for convex expectations }
\author{Chuanfeng Sun\thanks{School of Mathematical Sciences, University of Jinan,
Jinan, Shandong 250022, P.R. China. e-mail: sms\_suncf@ujn.edu.cn. This
research is partially supported by the National Natural Science Foundation of
China (No. 11701214), the Natural Science Foundation of Shandong Province (No.
ZR2017BA032).} \quad Shaolin Ji\thanks{Zhongtai Institute of Finance, Shandong
University, Jinan, Shandong 250100, PR China. e-mail: jsl@sdu.edu.cn. This
research is supported by National Natural Science Foundation of China (No.
11571203), the Programme of Introducing Talents of Discipline to Universities
of China (No. B12023).}}
\date{}
\maketitle

\textbf{Abstract}. We study the Neyman-Pearson theory for convex expectations
(convex risk measures) on $L^{\infty}(\mu)$. Without assuming that the level
sets of penalty functions are weakly compact, a new approach different from
the convex duality method is proposed to find a representative pair $(Q^{\ast
},P^{\ast})$ such that the optimal tests are just the classical Neyman-Pearson
tests between the representative probabilities $Q^{\ast}$ and $P^{\ast}$. The
key observation is that the feasible test set is compact in the weak$^{\ast}$
topology by a generalized result of Banach-Alaoglu theorem. Then the minimax
theorem can be applied and the representative probability $Q^{\ast}$ is found
first. Secondly, under the probability $Q^{\ast}$, we find the representative
probability measure $P^{\ast}$ by solving a dual problem. Finally, we apply
our results to a shortfall risk minimizing problem in an incomplete financial market.

{\textbf{Key words}. Composite }Hypotheses, Neyman-Pearson lemma, Convex
expectation, Banach-Alaoglu theorem,\ Minimax theorem

{\textbf{Mathematics Subject Classification (2010)}. } 49J35, 62G10, 91B30

\section{Introduction}

The classical Neyman-Pearson lemma gives the most powerful test for
discriminating between two probability measures and has important applications
in various fields (see \cite{fe}, \cite{le-ro}).

It is well known that many phenomena need to be explored by nonlinear
probabilities or expectations. In 1954, Choquet \cite{r3} extended the
probability measure to the capacity and gave a nonlinear integral named after
him. The coherent risk measure was proposed by Artzner et al. \cite{r5} and
the $g$-expectation was initiated by Peng \cite{r4} in 1999. F\"{o}llmer and
Schied \cite{r6} generalized the coherent risk measure to the convex risk
measure in 2002. Divergence risk measures were considered by Ben-Tal and
Teboulle \cite{BT} under the name of optimized certainty equivalents.

Along with the development of the above concepts, several nonlinear versions
of Neyman-Pearson lemma have also been established. In 1973, Huber and
Strassen \cite{{r7}} studied the Neyman-Pearson lemma for capacities.
Cvitani\'{c} and Karatzas \cite{r1} extended the classical Neyman-Pearson
theory for testing composite hypotheses versus composite alternatives in 2001.
Later Schied \cite{r2} gave a Neyman-Pearson lemma for law-invariant coherent
risk measures and robust utility functionals. Ji and Zhou \cite{r8} studied
hypothesis tests for $g$-probabilities in 2010. Rudloff and Karatzas \cite{r9}
studied composite hypotheses by using convex duality in 2010. Apart from their
own theoretical value, the nonlinear versions of Neyman-Pearson lemma have
been found to have many applications especially in finance. For instance,
F\"{o}llmer and Leukert \cite{fo-le-1999} and \cite{fo-le-2000} studied the
quantile hedging and efficient hedging which minimizes the shortfall risks in
an incomplete financial market. Rudloff \cite{r10} found a self-financing
strategy that minimize the convex risk of the shortfall using convex duality method.

In fact, the composite hypotheses testing problem in \cite{r1} can also be
seen as discriminating between two sublinear expectations. A natural
generalization is how to discriminating between two convex expectations. In
this paper, we mainly investigate the Neyman-Pearson lemma for convex
expectations on $L^{\infty}(\mu)$. In our context, the definition of convex
expectation is essentially equivalent to that of convex risk measure (see
Definition \ref{DEF-convex expectation}). For two given convex expectations
$\rho_{1}$, $\rho_{2}$ on $L^{\infty}(\mu)$ and a significance level $\alpha$,
we want to find an optimal test $X^{\ast}$ which minimizes the expectation of
Type II error with respect to $\rho_{2}$, among all tests that keep the
expectation of Type I error with respect to $\rho_{1}$ below the given
acceptable significance level $\alpha\in(0,1)$. In other words, we study the
following problem:%
\begin{equation}
\text{minimize}\,\rho_{2}(1-X),\label{prelimilary}%
\end{equation}
over the set $\mathcal{X}_{\alpha}=\{X\in L^{\infty}(\mu):0\leq X\leq1,$
$\rho_{1}(X)\leq\alpha\}$. In order to study the Neyman-Pearson-type
optimization problems more conveniently, in this paper, we actually embed
problem (\ref{prelimilary}) into a broader problem: for two given random
variables $K_{1}$ and $K_{2}$ belonging to $L^{\infty}(\mu)$ such that $0\leq
K_{1}<K_{2}$,
\begin{equation}
\text{minimize}\quad\rho_{2}(K_{2}-X),\label{Priliminary-1}%
\end{equation}
over the set $\mathcal{X}_{\alpha}=\{X:K_{1}\leq X\leq K_{2},\rho_{1}%
(X)\leq\alpha,X\in L^{\infty}(\mu)\}$.

The main purpose of studying Neyman-Pearson lemma is to find the form of the
optimal test. An interesting question is whether there exists a representative
pair of probabilities $(Q^{\ast},P^{\ast})$ such that the optimal test for
problem (\ref{prelimilary}) is just the optimal test between the simple
hypotheses $Q^{\ast}$ and $P^{\ast}$. In most literatures, the convex duality
method is employed to study the nonlinear Neyman-Pearson lemma
(Neyman-Pearson-type optimization problems) and the corresponding pair of
simple hypotheses is found. For example, without assuming the set of densities
which generate the sublinear expectation is weakly compact, Cvitani\'{c} and
Karatzas \cite{r1} studied the Neyman-Pearson lemma for sublinear
expectations. To minimize the shortfall risk in an incomplete market,
F\"{o}llmer, Leukert \cite{fo-le-2000} and Rudloff \cite{r10} chose a specific
convex risk measure and the convex risk measure on $L^{1}(\mu)$ respectively.
They solved the corresponding Neyman-Pearson-type optimization problems in
which the sets of densities that generate the convex risk measures are weakly compact.

To solve problem (\ref{Priliminary-1}), we can not apply the convex duality
method as in \cite{r1}. The reason is that this method needs to determine the
representative pair $(Q^{\ast},P^{\ast})$ at the same time and the additional
penalty function terms in the representation of convex expectations make this
approach impossible. So in this paper, We propose first finding the
probability $Q^{\ast}$ and then looking for the probability $P^{\ast}$ under
the fixed probability $Q^{\ast}$. The main difficulty in finding $Q^{\ast}$ is
that we only assume that the level sets of penalty functions are closed under
the $\mu$-a.e. convergence which is similar to the assumption in \cite{r1}.
Under this assumption, the set of densities which generate a convex
expectation on $L^{\infty}(\mu)$ is not weakly compact in general and the
minimax theorem seems inapplicable. The key to solving this difficulty is that
we find the feasible set $\mathcal{X}_{\alpha}$ is compact in the weak$^{\ast
}$ topology $\sigma(L^{\infty},L^{1})$ by a generalized result of
Banach-Alaoglu Theorem. Based on this observation, the minimax theorem can be
applied and the representative probability $Q^{\ast}$ for $\rho_{2}$ is found.
Under the fixed probability $Q^{\ast}$, finding the probability $P^{\ast}$
directly by the convex duality method is technically complicated. By solving
its dual problem, we also find the representative probability measure
$P^{\ast}$ for $\rho_{1}$. Thus, the optimal tests for convex expectations on
$L^{\infty}(\mu)$ are just the classical Neyman-Pearson tests between a fixed
representative pair $(Q^{\ast},P^{\ast})$.

It is obvious that a convex expectation on $L^{1}(\mu)$ is also a convex
expectation on $L^{\infty}(\mu)$. So the Neyman-Pearson lemma for convex
expectations on $L^{1}(\mu)$ is a natural inference of the Neyman-Pearson
lemma for convex expectations on $L^{\infty}(\mu)$ (see Corollary
\ref{NP lemma-L1}).

Finally, we apply our results to a shortfall risk minimizing problem in an
incomplete financial market. The shortfall risk is measured by the convex
expectation of the shortfall. For a partially hedged contingent claim $H\in
L^{\infty}(\mu)$, we consider the convex expectation on $L^{\infty}(\mu)$ and
solve this minimizing problem by Theorem \ref{main-result} and the classical
Neyman-Pearson lemma. For $H\in L^{1}(\mu)$, we need to study the following
problem: for $K_{1}$ and $K_{2}$ belonging to $L^{1}(\mu)$ such that $0\leq
K_{1}<K_{2}$,
\begin{equation}
\text{minimize}\quad\rho_{2}(K_{2}-X),\label{Priliminary-2}%
\end{equation}
over the set $\mathcal{X}_{\alpha}=\{X:K_{1}\leq X\leq K_{2},\rho_{1}%
(X)\leq\alpha,X\in L^{1}(\mu)\}$, where $\rho_{1}$ and $\rho_{2}$ are two
given convex expectations on $L^{1}(\mu)$. We find that similar ideas for
solving problem (\ref{Priliminary-1}) can be used to solve problem
(\ref{Priliminary-2}). Since the set of densities which generate a convex
expectation on $L^{1}(\mu)$ is weakly compact which greatly simplifies the
proof, we only put this result in the appendix and give a brief proof.

This paper is organized as follows: In Section 2, we give some preliminaries
and formulate the simple hypothesis testing problem for convex expectations on
$L^{\infty}(\mu)$. The existence of the optimal tests is derived in section 3.
In section 4, we obtain the form of the optimal tests. An application is given
to illustrate our main results in section 5. Finally, in the appendix we show
that if convex expectations are continuous from above, then Assumption
\ref{assumption} holds naturally and solve problem (\ref{Priliminary-2}) for
convex expectations on $L^{1}(\mu)$.

\section{Preliminaries and Problem Formulation}

Let $(\Omega,\mathcal{F},\mu)$ be a probability space and $\mathcal{M}$ be the
set of probability measures on $(\Omega,\mathcal{F})$ that are absolutely
continuous with respect to $\mu$. $P$ and $Q$ are probability measures and
their Radon-Nikodym derivatives $\frac{dP}{d\mu}$ and $\frac{dQ}{d\mu}$ are
denoted as $G_{P}$ and $H_{Q}$ respectively.

\begin{definition}
\label{DEF-convex expectation}A mapping $\rho$: $L^{\infty}(\mu)\rightarrow
\mathbb{R}$ is called a convex expectation on $L^{\infty}(\mu)$\ if for any
$X,Y\in L^{\infty}(\mu)$, we have

(i) Monotonicity: If $X\geq Y$, then $\rho(X)\geq\rho(Y)$;

(ii) Invariance: If $c$ is a constant, then $\rho(X+c)=\rho(X)+c$;

(iii) Convexity: If $\lambda\in[0, 1]$, then $\rho\big(\lambda X+(1-\lambda
)Y\big)\leq\lambda\rho(X)+(1-\lambda)\rho(Y)$.
\end{definition}

In the above definition, If $L^{\infty}(\mu)$ is replaced by $L^{1}(\mu)$,
then we can define the convex expectation on $L^{1}(\mu)$ similarly. Obviously
a convex expectation on $L^{1}(\mu)$ is also a convex expectation on
$L^{\infty}(\mu)$. Unless specifically stated, a convex expectation refers to
a convex expectation on $L^{\infty}(\mu)$ in this paper.

Note that if we define $\rho^{\prime}(X)=\rho(-X)$, then $\rho^{\prime}$ is a
convex risk measure.

\begin{definition}
\label{continuous from below} We call a convex expectation $\rho$ is
continuous from below iff for any sequence $\{X_{n}\}_{n\geq1}\subset
L^{\infty}(\mu)$ increases to some $X\in L^{\infty}(\mu)$, then $\rho
(X_{n})\rightarrow\rho(X)$.
\end{definition}

The following theorem comes from Theorem 6 and Proposition 7 in \cite{r6}.

\begin{theorem}
\label{representation} If a convex expectation $\rho$ is continuous from
below, then

i) For any $X\in L^{\infty}(\mu)$,
\begin{equation}
\rho(X)=\sup\limits_{P\in\mathcal{M}}\big(E_{P}[X]-\rho^{\ast}(P)\big),
\end{equation}
where $\rho^{\ast}$ is the penalty function of $\rho$ and $\rho^{\ast}%
(P)=\sup\limits_{X\in L^{\infty}(\mu)}\big(E_{P}[X]-\rho(X)\big)$.

ii) For any bounded sequence $\{X_{n}\}_{n\geq1}\subset L^{\infty}(\mu)$, if
$X_{n}$ converges to some $X\in L^{\infty}(\mu)$ in probability, then
$\rho(X)\leq\liminf\limits_{n\rightarrow\infty}\rho(X_{n})$.
\end{theorem}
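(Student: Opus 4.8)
The plan is to treat this as the Föllmer--Schied robust representation theorem, organised so that part ii) is proved first and then fed into part i). Throughout, write $\rho^{*}(P)=\sup_{X\in L^{\infty}(\mu)}\big(E_{P}[X]-\rho(X)\big)$ for the penalty function and note that the inequality $\rho(X)\geq E_{P}[X]-\rho^{*}(P)$ for every $P\in\mathcal{M}$ is immediate from this definition (Fenchel--Young); the whole content of i) is the reverse inequality. I would first record two elementary facts from the axioms: monotonicity together with invariance force $|\rho(X)-\rho(Y)|\leq\|X-Y\|_{\infty}$, so $\rho$ is norm-continuous and convex on $L^{\infty}(\mu)$; and the effective domain of the conjugate is contained in $\mathcal{M}$, since testing $\rho^{*}$ against $X=-\lambda\mathbf{1}_{A}$ as $\lambda\to\infty$ forces the associated density to be nonnegative, while testing against constants $X=c$ forces its total mass to equal $1$. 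Thus any $Z\in L^{1}(\mu)$ with $\rho^{*}(Z)<\infty$ is the density of some $P\in\mathcal{M}$.

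For part ii) I would argue directly from continuity from below, without yet invoking i). Given a bounded sequence $X_{n}\to X$ in probability, it suffices (passing to subsequences) to treat the case $X_{n}\to X$ $\mu$-a.e. Set $Y_{k}=\inf_{n\geq k}X_{n}$; these are uniformly bounded, increase $\mu$-a.e. to $\liminf_{n}X_{n}=X$, and satisfy $Y_{k}\leq X_{n}$ for $n\geq k$. Monotonicity gives $\rho(Y_{k})\leq\inf_{n\geq k}\rho(X_{n})$, while continuity from below gives $\rho(Y_{k})\uparrow\rho(X)$; letting $k\to\infty$ yields $\rho(X)\leq\liminf_{n}\rho(X_{n})$. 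A routine subsequence-of-a-subsequence argument upgrades this from a.e. convergence to convergence in probability.

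The heart of the proof is part i), where I would show that $\rho$ is lower semicontinuous for the weak$^{\ast}$ topology $\sigma(L^{\infty},L^{1})$ and then apply the Fenchel--Moreau biduality theorem for the dual pair $(L^{\infty}(\mu),L^{1}(\mu))$. This gives $\rho(X)=\sup_{Z\in L^{1}(\mu)}\big(\langle Z,X\rangle-\rho^{*}(Z)\big)$, and restricting to the effective domain identified above turns the supremum into one over $P\in\mathcal{M}$, i.e. the claimed formula. To obtain the weak$^{\ast}$ lower semicontinuity it is enough to show that each sublevel set $C_{c}=\{X:\rho(X)\leq c\}$ is $\sigma(L^{\infty},L^{1})$-closed. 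By part ii) (the Fatou property) each $C_{c}$ is convex and closed under bounded a.e.-convergent sequences, hence closed in probability on every norm ball; the Krein--Smulian theorem then reduces weak$^{\ast}$-closedness to closedness of the traces $C_{c}\cap\{\|X\|_{\infty}\leq r\}$, which follows from a convexity argument identifying probability-closedness with weak$^{\ast}$-closedness on balls.

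I expect this last step to be the main obstacle. The delicate point is precisely the passage from the Fatou property, which only controls limits of a.e.- (or probability-) convergent sequences, to genuine $\sigma(L^{\infty},L^{1})$-closedness, which a priori concerns weak$^{\ast}$ nets and is what ultimately excludes purely finitely additive elements from the domain of $\rho^{*}$. Everything else---the Lipschitz estimate, the identification of the domain with $\mathcal{M}$, the monotone approximation in part ii), and the final Fenchel--Moreau duality---is routine convex analysis once this closedness is in hand.
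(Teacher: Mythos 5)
Your proposal is correct, but note that the paper does not prove this theorem at all --- it imports it verbatim from Theorem 6 and Proposition 7 of F\"{o}llmer and Schied \cite{r6}, so the ``paper's own proof'' is just that citation. Your argument (the Lipschitz estimate and identification of $\mathrm{dom}\,\rho^{\ast}$ with $\mathcal{M}$; the Fatou property in ii) via $Y_{k}=\inf_{n\geq k}X_{n}\uparrow X$ and continuity from below; then weak$^{\ast}$ lower semicontinuity of the sublevel sets through Krein--Smulian together with the Mazur/Hahn--Banach identification of probability-closedness with $\sigma(L^{\infty},L^{1})$-closedness on norm balls; and finally Fenchel--Moreau for the dual pair $(L^{\infty}(\mu),L^{1}(\mu))$) is a faithful reconstruction of the standard proof in that cited source, including the genuinely delicate step you correctly single out, so in substance your route coincides with the one the paper relies on.
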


\subsection{Composite hypotheses and alternatives}

Given two convex expectations $\rho_{1}$ and $\rho_{2}$, by (i) of Theorem
\ref{representation},
\[
\rho_{1}(X)=\sup\limits_{P\in\mathcal{M}}(E_{P}[X]-\rho_{1}^{\ast}%
(P))\quad\text{and}\quad\rho_{2}(X)=\sup\limits_{Q\in\mathcal{M}}%
(E_{Q}[X]-\rho_{2}^{\ast}(Q)),
\]
where $\rho_{i}^{\ast}$ is the penalty function of $\rho_{i}$ for $i=1,2$.

If we denote
\[
\mathcal{P}=\{P:P\in\mathcal{M},\rho_{1}^{\ast}(P)<\infty\}\quad
\text{and}\quad\mathcal{Q}=\{Q:Q\in\mathcal{M},\rho_{2}^{\ast}(Q)<\infty\},
\]
then $\mathcal{P}$ and $\mathcal{Q}$ are nonempty convex sets and
\[
\rho_{1}(X)=\sup\limits_{P\in\mathcal{P}}(E_{P}[X]-\rho_{1}^{\ast}%
(P))\quad\text{and}\quad\rho_{2}(X)=\sup\limits_{Q\in\mathcal{Q}}%
(E_{Q}[X]-\rho_{2}^{\ast}(Q)).
\]

Suppose that $\mathcal{P\cap Q=\varnothing}$. Now we want to discriminate
$\mathcal{P}$ (composite hypotheses) against $\mathcal{Q}$ (composite
alternatives) for a significance level $\alpha$. 
Then we shall look for a randomized test $X^{\ast}$ which minimizes the
maximum error (Type II)
\begin{equation}
\sup\limits_{Q\in\mathcal{Q}}(E_{Q}[1-X]-\rho_{2}^{\ast}(Q)),
\label{composite problem}%
\end{equation}
over all randomized tests $X$ such that%
\[
\sup\limits_{P\in\mathcal{P}}(E_{P}[X]-\rho_{1}^{\ast}(P))\leq\alpha.
\]
\

It is worth pointing out that problem (\ref{composite problem}) is a natural
extension of the problem (2.6) in \cite{r1}. For a possible probability
measure $P$ (resp. $Q$) and a candidate randomized test $X$, only the
expectation $E_{P}[X]$ (resp. $E_{Q}[X]$) was taken into account in \cite{r1}.
Different from this in \cite{r1}, we generalize to the case that considering
$E_{P}[X]-\rho_{1}^{\ast}(P)$ (resp. $E_{Q}[X]-\rho_{2}^{\ast}(Q)$) where
$\rho_{1}^{\ast}(P)$ (resp. $\rho_{2}^{\ast}(Q)$) may be understood as a
``weight" for a probability measure $P$ (resp. $Q$) over $\mathcal{P}$ (resp.
$\mathcal{Q}$).

Note that $\sup\limits_{P\in\mathcal{P}}E_{P}[\cdot]$ and $\sup\limits_{Q\in
\mathcal{Q}}E_{Q}[\cdot]$ define two sublinear expectations; $\sup
\limits_{P\in\mathcal{P}}(E_{P}[\cdot]-\rho_{1}^{\ast}(P))$ and $\sup
\limits_{Q\in\mathcal{Q}}(E_{Q}[\cdot]-\rho_{2}^{\ast}(Q))$ define two convex
expectations. Then, from another point of view, the problem (2.6) in \cite{r1}
(resp. our problem (\ref{composite problem})) can be understood as
discriminating between two sublinear expectations (resp. convex expectations).
In other words, our problem (\ref{composite problem}) can be rewritten as
problem (\ref{prelimilary}):
\[
\text{minimize}\quad\rho_{2}(1-X)
\]
over the set $\mathcal{X}_{\alpha}=\{X:\Omega\rightarrow\lbrack0,1],\;\rho
_{1}(X)\leq\alpha\}.$

\subsection{A general problem}

In order to investigate Neyman-Pearson lemma and Neyman-Pearson-type
optimization problems together, we study the following more general problem.

Given two convex expectations $\rho_{1}$ and $\rho_{2}$, for a significance
level $\alpha$ and two random variables $K_{1}$ and $K_{2}$ belonging to
$L^{\infty}(\mu)$ such that $0\leq K_{1}<K_{2}$, we want to
\begin{equation}
\text{minimize}\quad\rho_{2}(K_{2}-X),\label{initial-problem}%
\end{equation}
over the set
\[
\mathcal{X}_{\alpha}=\{X:K_{1}\leq X\leq K_{2},\rho_{1}(X)\leq\alpha,X\in
L^{\infty}(\mu)\}.
\]
Without loss of generality, we assume $\rho_{1}(K_{1})\leq\alpha\leq\rho
_{1}(K_{2})$. Note that if $K_{1}=0$ and $K_{2}=1$, then the above problem
becomes problem (\ref{prelimilary}).

For simplicity, we still call $X\in\mathcal{X}_{\alpha}$ a test for our
general problem (\ref{initial-problem}).

\begin{definition}
We call $X^{\ast}$ the optimal test of (\ref{initial-problem}) if $X^{\ast}%
\in\mathcal{X}_{\alpha}$ and
\begin{equation}
\rho_{2}(K_{2}-X^{\ast})=\inf\limits_{X\in\mathcal{X}_{\alpha}}\rho_{2}%
(K_{2}-X).
\end{equation}

\end{definition}

Under some mild assumptions on $\mathcal{P}$ and $\mathcal{Q}$, we shall prove
that an optimal test exists and has a similar form of the optimal tests for
the classical Neyman-Pearson theory.

\section{The existence of the optimal test}

Set $\beta=\inf\limits_{X\in\mathcal{X}_{\alpha}}\rho_{2}(K_{2}-X)$. The
following result shows that the optimal test exists. \

\begin{theorem}
\label{existence} If $\rho_{1}$ and $\rho_{2}$ are convex expectations
continuous from below, then the optimal test of (\ref{initial-problem}) exists.
\end{theorem}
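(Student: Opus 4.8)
The plan is to prove existence by the direct method of the calculus of variations: take a minimizing sequence $\{X_n\}$, extract a limit in a suitable topology, and show the limit is both feasible and optimal. The crucial topological input—flagged repeatedly in the introduction—is that the feasible set $\mathcal{X}_\alpha$ is compact in the weak$^*$ topology $\sigma(L^\infty,L^1)$ via a generalized Banach–Alaoglu argument. So I would first fix a minimizing sequence $X_n \in \mathcal{X}_\alpha$ with $\rho_2(K_2 - X_n) \to \beta$, and observe that the constraint $K_1 \le X_n \le K_2$ keeps the sequence norm-bounded in $L^\infty(\mu)$.

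The main technical issue is that weak$^*$ sequential compactness of the unit ball of $L^\infty$ is not automatic unless $L^1(\mu)$ is separable. I would handle this the standard way for such Neyman–Pearson arguments: pass to convex combinations. By Komlós' theorem (or equivalently the Banach–Saks property applied after extracting via Komlós), there is a subsequence whose Cesàro averages $\tilde{X}_n = \frac{1}{n}\sum_{k=1}^n X_{n_k}$ converge $\mu$-a.e. to some $X^*$. Since each $\tilde{X}_n$ is a convex combination of the $X_{n_k}$, convexity of $\mathcal{X}_\alpha$ (it is convex because $\rho_1$ is convex and the bound constraints are linear) guarantees $\tilde{X}_n \in \mathcal{X}_\alpha$, and convexity of $\rho_2$ together with the fact that $X \mapsto \rho_2(K_2 - X)$ is convex shows the averaged sequence is still minimizing. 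The a.e. limit $X^*$ clearly satisfies $K_1 \le X^* \le K_2$, so $X^* \in L^\infty(\mu)$.

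It remains to verify feasibility and optimality of $X^*$, and this is exactly where the continuity-from-below hypothesis enters through part (ii) of Theorem \ref{representation}. The averaged tests $\tilde{X}_n$ are uniformly bounded and converge to $X^*$ $\mu$-a.e., hence in probability; applying the lower-semicontinuity statement of Theorem \ref{representation}(ii) to $\rho_1$ gives
\[
\rho_1(X^*) \le \liminf_{n\to\infty}\rho_1(\tilde{X}_n) \le \alpha,
\]
so $X^* \in \mathcal{X}_\alpha$. Applying the same lower-semicontinuity to $\rho_2$ with the bounded sequence $K_2 - \tilde{X}_n \to K_2 - X^*$ in probability yields
\[
\rho_2(K_2 - X^*) \le \liminf_{n\to\infty}\rho_2(K_2 - \tilde{X}_n) = \beta.
\]
Since $X^* \in \mathcal{X}_\alpha$ forces $\rho_2(K_2 - X^*) \ge \beta$ by definition of the infimum, we conclude $\rho_2(K_2 - X^*) = \beta$, so $X^*$ is the optimal test.

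I expect the genuine obstacle to be the passage to an a.e.-convergent (sub)sequence: naive weak$^*$ compactness gives a subnet, not a sequence, and one cannot feed a subnet into the sequential lower-semicontinuity of Theorem \ref{representation}(ii). The Komlós/convex-combination device is what reconciles the weak$^*$ compactness observation with the sequential nature of the semicontinuity hypothesis, and verifying that the convex averages remain both feasible and minimizing is the step requiring care.
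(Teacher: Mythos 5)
Your proposal is correct and follows essentially the same route as the paper: a minimizing sequence, Koml\'{o}s' theorem to extract Ces\`{a}ro averages converging $\mu$-a.e., and then the Fatou-type lower semicontinuity of Theorem \ref{representation}(ii) combined with convexity of $\rho_{1}$ and $\rho_{2}$ to show the limit is feasible and optimal. The only cosmetic difference is that the paper applies the semicontinuity directly to the averages $\frac{1}{k}\sum_{i=1}^{k}X_{n_{i}}$ and bounds $\rho_{j}$ of the average by the average of the $\rho_{j}$-values (using the quantitative choice $\rho_{2}(K_{2}-X_{n})<\beta+2^{-n}$), rather than first arguing that the averaged sequence lies in $\mathcal{X}_{\alpha}$ and remains minimizing, but these are the same argument.
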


\begin{proof}
Take a sequence $\{X_{n}\}_{n\geq1}\subset\mathcal{X}_{\alpha}$ such that
\[
\rho_{2}(K_{2}-X_{n})<\beta+\frac{1}{2^{n}}.
\]
By the Koml\'{o}s theorem, there exist a subsequence $\{X_{n_{i}}\}_{i\geq1}$
of $\{X_{n}\}_{n\geq1}$ and a random variable $X^{\ast}$ such that \
\begin{equation}
\lim_{k\rightarrow\infty}\frac{1}{k}\sum_{i=1}^{k}X_{n_{i}}=X^{\ast},\quad
\mu-a.e..
\end{equation}
Since for any $n$, $K_{1}\leq X_{n}\leq K_{2}$, we have $K_{1}\leq X^{\ast
}\leq K_{2}$, $\mu$-a.e.. By (ii) of Theorem \ref{representation},
\[
\rho_{1}(X^{\ast})\leq\liminf_{k\rightarrow\infty}\rho_{1}(\frac{1}{k}%
\sum_{i=1}^{k}X_{n_{i}})\leq\liminf_{k\rightarrow\infty}\frac{1}{k}\sum
_{i=1}^{k}\rho_{1}(X_{n_{i}})\leq\alpha,
\]
which leads to $X^{\ast}\in\mathcal{X}_{\alpha}$. On the other hand,
\[
\rho_{2}(K_{2}-X^{\ast})\leq\liminf_{k\rightarrow\infty}\frac{1}{k}\sum
_{i=1}^{k}\rho_{2}(K_{2}-X_{n_{i}})\leq\beta+\lim_{k\rightarrow\infty}\frac
{1}{k}=\beta.
\]
Thus,
\[
\rho_{2}(K_{2}-X^{\ast})=\inf\limits_{X\in\mathcal{X}_{\alpha}}\rho_{2}%
(K_{2}-X).
\]
This completes the proof.
\end{proof}

\section{The form of the optimal test}

Note that
\[
\inf\limits_{X\in\mathcal{X}_{\alpha}}\rho_{2}(K_{2}-X)=\inf\limits_{X\in
\mathcal{X}_{\alpha}}\sup\limits_{Q\in\mathcal{Q}}\big(E_{Q}[K_{2}-X]-\rho
_{2}^{\ast}(Q)\big).
\]
Then $X^{\ast}$ is the optimal test of (\ref{initial-problem}) if and only if
it is the optimal test of the problem:
\begin{equation}
\text{minimize}\quad\sup\limits_{Q\in\mathcal{Q}}\big(E_{Q}[K_{2}-X]-\rho
_{2}^{\ast}(Q)\big), \label{extended-problem}%
\end{equation}
over $\mathcal{X}_{\alpha}$.

Now we focus on solving problem (\ref{extended-problem}). Denote the level
sets of penalty functions $\rho_{1}^{\ast}$ and $\rho_{2}^{\ast}$ as
\[
\mathcal{G}_{c}=\{G_{P}:P\in\mathcal{P},\text{\ }\rho_{1}^{\ast}(P)\leq
c\}\quad\text{and}\quad\mathcal{H}_{c}=\{H_{Q}:Q\in\mathcal{Q},\text{\ }%
\rho_{2}^{\ast}(Q)\leq c\},
\]
where $c$ is a constant. Since $\rho_{1}^{\ast}$ and $\rho_{2}^{\ast}$ are
convex functions on $\mathcal{M}$, then both $\mathcal{G}_{c}$ and
$\mathcal{H}_{c}$ are convex sets.

Since $K_{1}$ and $K_{2}$ belong to $L^{\infty}(\mu)$, we denote the least
upper bound of them by $M$.

\begin{assumption}
\label{assumption} There exist two constants $u>\max\{0, M-\rho_{1}(0)+1\}$
and $v>\max\{0, M-\rho_{2}(0)+1\}$ such that $\mathcal{G}_{u}$ and
$\mathcal{H}_{v}$ are both closed under the $\mu$-a.e. convergence.
\end{assumption}

Since the penalty function of the sublinear expectation takes only the values
$0$ and $+\infty$, for sublinear case, Assumption \ref{assumption} is equal to
require $\{G_{P}:P\in P\}$ and $\{H_{Q}:Q\in Q\}$ are both closed under the
$\mu$-a.e. convergence, which is similar as the assumption given by
Cvitani\'{c} and Karatzas in \cite{r1}. In Appendix, we show that if $\rho
_{1}$ and $\rho_{2}$ are continuous from above, then Assumption
\ref{assumption} holds naturally.

\subsection{The existence of a representative probability $Q^{\ast}$}

In this subsection, we want to find a representative probability $Q^{\ast}\in\mathcal{Q}$
such that
\[
\inf\limits_{X\in\mathcal{X}_{\alpha}}\sup\limits_{Q\in\mathcal{Q}}%
\big(E_{Q}[K_{2}-X]-\rho_{2}^{\ast}(Q)\big)=\inf\limits_{X\in\mathcal{X}%
_{\alpha}}E_{Q^{\ast}}[K_{2}-X]-\rho_{2}^{\ast}(Q^{\ast})\text{.}%
\]
If such a $Q^{\ast}$ exists, then for any optimal test $X^{\ast}$ of
(\ref{initial-problem}), we have
\[
\sup\limits_{Q\in\mathcal{Q}}\big(E_{Q}[K_{2}-X^{\ast}]-\rho_{2}^{\ast
}(Q)\big)=\inf\limits_{X\in\mathcal{X}_{\alpha}}E_{Q^{\ast}}[K_{2}-X]-\rho
_{2}^{\ast}(Q^{\ast}),
\]
which leads to $E_{Q^{\ast}}[K_{2}-X^{\ast}]=\inf\limits_{X\in\mathcal{X}%
_{\alpha}}E_{Q^{\ast}}[K_{2}-X]$.

\begin{theorem}
\label{minimax-result} If $\rho_{1}$ and $\rho_{2}$ are convex expectations
continuous from below and Assumption \ref{assumption} holds, then there exists
$Q^{\ast}\in\mathcal{Q}$ such that for any optimal test $X^{\ast}$ of
(\ref{initial-problem}), we have
\begin{equation}
E_{Q^{\ast}}[K_{2}-X^{\ast}]=\inf\limits_{X\in\mathcal{X}_{\alpha}}E_{Q^{\ast
}}[K_{2}-X].
\end{equation}

\end{theorem}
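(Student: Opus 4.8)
The goal is to produce a single $Q^{\ast}\in\mathcal{Q}$ achieving the outer infimum after swapping the order of $\inf$ and $\sup$ in problem (\ref{extended-problem}). The plan is to apply a minimax theorem to the function $F(X,Q)=E_{Q}[K_{2}-X]-\rho_{2}^{\ast}(Q)$ on $\mathcal{X}_{\alpha}\times\mathcal{Q}$. For this I need three ingredients: a suitable compact convex domain for the $X$-variable, convexity/concavity of $F$ in the two arguments, and appropriate semicontinuity. The concavity of $Q\mapsto F(X,Q)$ is immediate since $E_{Q}[K_{2}-X]$ is affine in $Q$ (equivalently in the density $H_{Q}$) and $-\rho_{2}^{\ast}$ is concave because $\rho_{2}^{\ast}$ is convex; the convexity (in fact affinity) of $X\mapsto F(X,Q)$ is likewise clear. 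So the substantive work is the topology.

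First I would argue that $\mathcal{X}_{\alpha}$ is compact in the weak$^{\ast}$ topology $\sigma(L^{\infty},L^{1})$. Since $K_{1}\leq X\leq K_{2}$ with $K_{1},K_{2}\in L^{\infty}(\mu)$, the set $\mathcal{X}_{\alpha}$ is contained in an order interval of $L^{\infty}(\mu)$, hence norm-bounded; by Banach--Alaoglu its weak$^{\ast}$ closure is weak$^{\ast}$ compact. I then need $\mathcal{X}_{\alpha}$ itself to be weak$^{\ast}$ closed: the pointwise constraints $K_{1}\leq X\leq K_{2}$ are preserved under $\sigma(L^{\infty},L^{1})$ limits (testing against nonnegative $L^{1}$ functions), and the constraint $\rho_{1}(X)\leq\alpha$ is preserved because part (ii) of Theorem \ref{representation} gives lower semicontinuity of $\rho_{1}$ along sequences converging in probability, which I must upgrade to weak$^{\ast}$ lower semicontinuity. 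This is exactly where Assumption \ref{assumption} and the ``generalized Banach--Alaoglu'' remark in the introduction enter: the closedness of the level set $\mathcal{G}_{u}$ under $\mu$-a.e.\ convergence is what lets me control the behaviour of $\rho_{1}$ under the relevant limits. Convexity of $\mathcal{X}_{\alpha}$ follows from convexity of $\rho_{1}$ and of the order interval.

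Next I would establish the semicontinuity of $F$ needed by the minimax theorem: for fixed $Q$, the map $X\mapsto E_{Q}[K_{2}-X]$ is weak$^{\ast}$ continuous on $\mathcal{X}_{\alpha}$ because $H_{Q}\in L^{1}(\mu)$ is exactly a weak$^{\ast}$-continuous test functional. With $\mathcal{X}_{\alpha}$ weak$^{\ast}$ compact and convex, $\mathcal{Q}$ convex, $F$ affine-and-continuous in $X$ and concave in $Q$, a minimax theorem (e.g.\ the Ky Fan / Sion form) yields
\[
\inf_{X\in\mathcal{X}_{\alpha}}\sup_{Q\in\mathcal{Q}}F(X,Q)=\sup_{Q\in\mathcal{Q}}\inf_{X\in\mathcal{X}_{\alpha}}F(X,Q).
\]
I then need the supremum on the right to be \emph{attained} at some $Q^{\ast}\in\mathcal{Q}$, not merely approached; here I expect to use Assumption \ref{assumption} again, namely that the level set $\mathcal{H}_{v}$ is closed under $\mu$-a.e.\ convergence, together with a Koml\'{o}s-type argument (as in Theorem \ref{existence}) to extract a limiting density $H_{Q^{\ast}}$ and to pass the relevant inequalities through the limit. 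Once $Q^{\ast}$ is secured as a maximizer, the saddle-point identity forces any optimal $X^{\ast}$ to satisfy $\sup_{Q}F(X^{\ast},Q)=\inf_{X}F(X,Q^{\ast})$, which rearranges into the claimed equality $E_{Q^{\ast}}[K_{2}-X^{\ast}]=\inf_{X\in\mathcal{X}_{\alpha}}E_{Q^{\ast}}[K_{2}-X]$ exactly as sketched in the paragraph preceding the theorem.

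I expect the main obstacle to be the attainment of the supremum over $\mathcal{Q}$ (and the associated weak$^{\ast}$ closedness of $\mathcal{X}_{\alpha}$ under only the $\mu$-a.e.\ level-set assumption rather than genuine weak compactness of the density sets). Since $\mathcal{H}_{v}$ is assumed closed only under $\mu$-a.e.\ convergence and need not be weakly compact, I cannot simply invoke continuity on a compact set; instead the delicate point is to combine the Koml\'{o}s theorem, the value bound built into the constants $u,v$ in Assumption \ref{assumption}, and the lower semicontinuity of $\rho_{2}^{\ast}$ to guarantee that the maximizing $Q^{\ast}$ stays inside $\mathcal{Q}$ with finite penalty. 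Verifying that the chosen constant $v>\max\{0,M-\rho_{2}(0)+1\}$ indeed confines the optimal $Q$ to the level set $\mathcal{H}_{v}$ is the technical heart of the argument.
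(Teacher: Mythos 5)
Your overall architecture coincides with the paper's: weak$^{\ast}$ compactness of $\mathcal{X}_{\alpha}$, a minimax theorem to swap $\inf$ and $\sup$, a Koml\'{o}s argument on a maximizing sequence of densities combined with the value bound forcing $\rho_{2}^{\ast}(Q_{n})\leq M-\gamma+\epsilon\leq M-\rho_{2}(0)+1\leq v$ (so that $\{H_{Q_{n}}\}\subset\mathcal{H}_{v}$), the convexity and $\mu$-a.e.\ closedness of $\mathcal{H}_{v}$ plus lower semicontinuity of $\rho_{2}^{\ast}$ to secure a maximizer $Q^{\ast}\in\mathcal{Q}$, and finally the sandwich $\inf_{X}E_{Q^{\ast}}[K_{2}-X]-\rho_{2}^{\ast}(Q^{\ast})\leq E_{Q^{\ast}}[K_{2}-X^{\ast}]-\rho_{2}^{\ast}(Q^{\ast})\leq\sup_{Q}\big(E_{Q}[K_{2}-X^{\ast}]-\rho_{2}^{\ast}(Q)\big)$ to extract the claimed equality. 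You also correctly identify the attainment of the supremum over $\mathcal{Q}$, and the role of the constant $v$, as the technical heart; this is exactly the paper's Lemma \ref{auxiliary-result} (note you will additionally need, as the paper does, the Scheff\'{e}-type upgrade from $\mu$-a.e.\ to $L^{1}$-norm convergence of the Ces\`{a}ro means of $H_{Q_{n_{i}}}$, in order to pass $\inf_{X\in\mathcal{X}_{\alpha}}E_{Q_{n}}[K_{2}-X]$ through the limit as in Lemma \ref{Fatou-property-2}).

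There is, however, one genuine flaw in your compactness step. You propose to obtain weak$^{\ast}$ closedness of $\{X:\rho_{1}(X)\leq\alpha\}$ by ``upgrading'' the sequential lower semicontinuity of $\rho_{1}$ along in-probability limits (Theorem \ref{representation}(ii)) to weak$^{\ast}$ lower semicontinuity, and you assert that Assumption \ref{assumption} (closedness of $\mathcal{G}_{u}$ under $\mu$-a.e.\ convergence) is what enables this. Neither claim works: weak$^{\ast}$ convergence in $\sigma(L^{\infty},L^{1})$ does not imply convergence in probability, nor a.e.\ convergence along a subsequence (take $X_{n}(\omega)=\sin(2\pi n\omega)$ on $[0,1]$, which converges weak$^{\ast}$ to $0$ but not in probability), so Theorem \ref{representation}(ii) gives you nothing about weak$^{\ast}$ limits; and Assumption \ref{assumption} concerns a.e.\ limits of \emph{densities} $G_{P}$ in $L^{1}(\mu)$, not limits of the tests $X$, so it cannot control the feasible set at all. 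Indeed, in the paper Assumption \ref{assumption} plays no role in Lemma \ref{weak*-compact}; it is used only for the attainment of $Q^{\ast}$ (and later $P^{\ast}$). The correct and simple mechanism, which you already have in hand, is the dual representation: $\rho_{1}(X)=\sup_{P\in\mathcal{P}}\big(E_{\mu}[XG_{P}]-\rho_{1}^{\ast}(P)\big)$ exhibits $\rho_{1}$ as a pointwise supremum of $\sigma(L^{\infty},L^{1})$-continuous affine functionals (since $G_{P}\in L^{1}(\mu)$), hence $\rho_{1}$ is weak$^{\ast}$ lower semicontinuous and its sublevel set is weak$^{\ast}$ closed; together with your order-interval bound and Banach--Alaoglu this gives compactness. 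The paper packages the same idea differently, characterizing $\mathcal{X}_{\alpha}$ as the polar-type set $\hat{\mathcal{X}}_{\alpha}$ of the support functional $\phi(Y)=\sup_{X\in\mathcal{X}_{\alpha}}E_{\mu}[XY]$ and invoking the generalized Banach--Alaoglu theorem, then verifying $\hat{\mathcal{X}}_{\alpha}\subset\mathcal{X}_{\alpha}$ precisely by bounding $E_{P}[\hat{X}]\leq\sup_{X\in\mathcal{X}_{\alpha}}E_{P}[X]$ for every $P\in\mathcal{P}$ and applying the representation. With this repair your proof goes through and is essentially the paper's.
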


Before proving Theorem \ref{minimax-result}, we first give some lemmas.

\begin{lemma}
\label{Fatou-property-2} For any sequence $\{Q_{n}\}_{n\geq1}\subset
\mathcal{M}$, if $H_{Q_{n}}$ converges to some $H_{Q_{0}}$ under $L^{1}(\mu)$
norm, then
\begin{equation}
\inf\limits_{X\in\mathcal{X}_{\alpha}}E_{Q_{0}}[K_{2}-X]\geq\limsup
_{n\rightarrow\infty}\inf\limits_{X\in\mathcal{X}_{\alpha}}E_{Q_{n}}[K_{2}-X].
\end{equation}

\end{lemma}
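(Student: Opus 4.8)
The plan is to read the claimed inequality as the upper semicontinuity of the value function $f(Q):=\inf_{X\in\mathcal{X}_{\alpha}}E_{Q}[K_{2}-X]$ along $L^{1}$-convergent sequences of densities, and to establish it by comparing each $f(Q_{n})$ with the $Q_{n}$-cost of a \emph{single, fixed} feasible test rather than with the $Q_{n}$-minimizers. Concretely, I would fix an arbitrary $X\in\mathcal{X}_{\alpha}$ and hold it constant while letting $n$ vary. This is the crucial structural choice: feeding the (near-)minimizers of $f(Q_{n})$ into the $Q_{0}$-functional would instead produce the reverse inequality $f(Q_{0})\leq\limsup_{n}f(Q_{n})$, so the order of operations must be arranged so that the minimization over $X$ comes \emph{last}.

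First I would record the trivial bound coming from the definition of the infimum: for every $n$ and the fixed test $X$,
\[
f(Q_{n})=\inf_{Y\in\mathcal{X}_{\alpha}}E_{Q_{n}}[K_{2}-Y]\leq E_{Q_{n}}[K_{2}-X].
\]
Next I would pass to the limit on the right-hand side. Since $X\in\mathcal{X}_{\alpha}$ forces $0\leq K_{2}-X\leq K_{2}\leq M$ $\mu$-a.e., the integrand is bounded by $M$ \emph{uniformly in} $X$, so
\[
\big|E_{Q_{n}}[K_{2}-X]-E_{Q_{0}}[K_{2}-X]\big|=\Big|\int(K_{2}-X)(H_{Q_{n}}-H_{Q_{0}})\,d\mu\Big|\leq M\,\|H_{Q_{n}}-H_{Q_{0}}\|_{L^{1}(\mu)}\to0,
\]
using the hypothesis that $H_{Q_{n}}\to H_{Q_{0}}$ in $L^{1}(\mu)$. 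Hence $E_{Q_{n}}[K_{2}-X]\to E_{Q_{0}}[K_{2}-X]$, and taking $\limsup_{n\to\infty}$ in the displayed bound gives $\limsup_{n\to\infty}f(Q_{n})\leq E_{Q_{0}}[K_{2}-X]$. Note that no optimal test for $f(Q_{n})$ is needed here, which is precisely what keeps the inequality pointing the right way.

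Finally, since the quantity $\limsup_{n\to\infty}f(Q_{n})$ does not depend on $X$ and the inequality above holds for every $X\in\mathcal{X}_{\alpha}$, I would take the infimum over $X$ on the right to obtain
\[
\limsup_{n\to\infty}f(Q_{n})\leq\inf_{X\in\mathcal{X}_{\alpha}}E_{Q_{0}}[K_{2}-X]=f(Q_{0}),
\]
which is exactly the assertion. The argument is elementary once the order of operations is fixed; the only point demanding care — and the place where the naive approach fails — is that one must bound the \emph{values} $f(Q_{n})$ using a test independent of $n$ and minimize over $X$ only afterwards. The uniform estimate $|K_{2}-X|\leq M$ is what lets the $L^{1}$-limit pass without invoking any compactness or uniform-integrability hypothesis.
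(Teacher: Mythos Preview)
Your argument is correct and is essentially the same as the paper's proof: fix $X\in\mathcal{X}_{\alpha}$, use $L^{1}$-convergence of the densities together with the uniform bound $|K_{2}-X|\leq M$ to get $E_{Q_{n}}[K_{2}-X]\to E_{Q_{0}}[K_{2}-X]$, bound each $\inf_{Y}E_{Q_{n}}[K_{2}-Y]$ from above by $E_{Q_{n}}[K_{2}-X]$, take $\limsup$, and then take the infimum over $X$. The paper's version is simply a terser write-up of the same steps, omitting the explicit $M\|H_{Q_{n}}-H_{Q_{0}}\|_{L^{1}(\mu)}$ estimate that you spell out.
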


\begin{proof}
For any $X\in\mathcal{X}_{\alpha}$, we have
\[
E_{Q_{0}}[K_{2}-X]=\lim\limits_{n\to\infty}E_{Q_{n}}[K_{2}-X]\geq
\limsup\limits_{n\to\infty}\inf\limits_{X\in\mathcal{X}_{\alpha}}E_{Q_{n}%
}[K_{2}-X].
\]
Then
\[
\inf\limits_{X\in\mathcal{X}_{\alpha}}E_{Q_{0}}[K_{2}-X]\geq\limsup
\limits_{n\to\infty}\inf\limits_{X\in\mathcal{X}_{\alpha}}E_{Q_{n}}[K_{2}-X].
\]
This completes the proof.
\end{proof}

\begin{lemma}
\label{weak*-compact} If $\rho_{1}$ is a convex expectation continuous from
below, then $\mathcal{X}_{\alpha}$ is compact in the weak$^{*}$ topology
$\sigma(L^{\infty}(\mu), L^{1}(\mu))$.
\end{lemma}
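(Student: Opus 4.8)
The plan is to prove that $\mathcal{X}_{\alpha}$ is weak$^{*}$ compact by showing it is a weak$^{*}$ closed subset of a weak$^{*}$ compact set, and then invoking the Banach--Alaoglu theorem. First I would observe that every test $X\in\mathcal{X}_{\alpha}$ satisfies $K_{1}\le X\le K_{2}$, so with $M$ denoting the common essential upper bound of $K_{1},K_{2}$, we have $\|X\|_{\infty}\le M$. Thus $\mathcal{X}_{\alpha}$ is contained in the closed ball $B_{M}=\{X\in L^{\infty}(\mu):\|X\|_{\infty}\le M\}$. By the Banach--Alaoglu theorem, since $L^{\infty}(\mu)$ is the dual of $L^{1}(\mu)$, the ball $B_{M}$ is compact in the weak$^{*}$ topology $\sigma(L^{\infty}(\mu),L^{1}(\mu))$. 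It therefore suffices to show that $\mathcal{X}_{\alpha}$ is weak$^{*}$ closed, since a closed subset of a compact set is compact.

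To establish weak$^{*}$ closedness, I would proceed in two parts, treating the order constraints and the risk constraint separately. The set $\{X\in L^{\infty}(\mu):K_{1}\le X\le K_{2}\}$ is convex and norm-closed; being convex it is weak$^{*}$ closed on bounded sets, or more directly one checks that each constraint $E_{\nu}[X]\ge 0$-type inequality against nonnegative densities is weak$^{*}$ closed, so the whole order-interval is weak$^{*}$ closed. The genuinely delicate part is the constraint $\rho_{1}(X)\le\alpha$. Here I would exploit the lower semicontinuity furnished by part (ii) of Theorem \ref{representation}. The idea is to take a net (or, working on the metrizable compact ball, a sequence) $\{X_{n}\}\subset\mathcal{X}_{\alpha}$ converging weak$^{*}$ to some $X$; I want to conclude $\rho_{1}(X)\le\alpha$.

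The hard part will be passing from weak$^{*}$ convergence to the convergence in probability that Theorem \ref{representation}(ii) requires, since weak$^{*}$ convergence alone does not imply convergence in probability. I would resolve this exactly as in the existence proof (Theorem \ref{existence}): invoke the Koml\'{o}s theorem to extract a subsequence whose Ces\`{a}ro averages $Y_{k}=\frac{1}{k}\sum_{i=1}^{k}X_{n_{i}}$ converge $\mu$-a.e., hence in probability, to a limit $\tilde{X}$. Because the original sequence converges weak$^{*}$ to $X$ and Ces\`{a}ro averaging preserves weak$^{*}$ limits, the a.e. limit $\tilde{X}$ must coincide with $X$ $\mu$-a.e. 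Now applying Theorem \ref{representation}(ii) to the bounded sequence $\{Y_{k}\}$ converging in probability to $X$, together with convexity of $\rho_{1}$, gives
\begin{equation}
\rho_{1}(X)\le\liminf_{k\to\infty}\rho_{1}(Y_{k})\le\liminf_{k\to\infty}\frac{1}{k}\sum_{i=1}^{k}\rho_{1}(X_{n_{i}})\le\alpha.
\end{equation}
Thus $X\in\mathcal{X}_{\alpha}$, proving weak$^{*}$ closedness.

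Combining the two pieces, $\mathcal{X}_{\alpha}$ is a weak$^{*}$ closed subset of the weak$^{*}$ compact ball $B_{M}$, and is therefore itself weak$^{*}$ compact. I expect the only subtlety to be the identification of the Koml\'{o}s a.e. limit with the weak$^{*}$ limit; one should verify that for every fixed $g\in L^{1}(\mu)$ the pairing $\langle Y_{k},g\rangle$ converges both to $\langle X,g\rangle$ (from weak$^{*}$ convergence and Ces\`{a}ro averaging) and to $\langle\tilde{X},g\rangle$ (by dominated convergence, since the $Y_{k}$ are uniformly bounded by $M$ and converge a.e.), forcing $X=\tilde{X}$ $\mu$-a.e.
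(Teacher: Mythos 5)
Your reduction—$\mathcal{X}_{\alpha}\subset B_{M}=\{X:\|X\|_{\infty}\leq M\}$, Banach--Alaoglu compactness of $B_{M}$, then weak$^{*}$ closedness of $\mathcal{X}_{\alpha}$—is a sensible plan, and your Koml\'{o}s/Fatou computation is correct \emph{for sequences}. The genuine gap is the bridge from sequences to the topology: the weak$^{*}$ topology on the ball of $L^{\infty}(\mu)$ is metrizable if and only if the predual $L^{1}(\mu)$ is separable, and the paper works on an arbitrary probability space with no separability hypothesis. So in general you must verify closedness along \emph{nets}, and the Koml\'{o}s theorem has no analogue for nets; the extraction step fails exactly where your parenthetical ``(or, working on the metrizable compact ball, a sequence)'' tries to dodge the issue. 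A second, smaller defect: the claim that the order interval $\{K_{1}\leq X\leq K_{2}\}$, being convex and norm-closed, ``is weak$^{*}$ closed on bounded sets'' is not a valid implication (Mazur upgrades norm-closed convex sets to \emph{weakly} closed, not weak$^{*}$ closed); your direct argument pairing against nonnegative $g\in L^{1}(\mu)$ is the correct justification and should be the only one given.

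The standard repair is the Krein--\v{S}mulian theorem: a convex set $C\subset L^{\infty}(\mu)$ is $\sigma(L^{\infty},L^{1})$-closed if (and only if) $C\cap\{\|X\|_{\infty}\leq r\}$ is closed under convergence in probability for every $r>0$ (see, e.g., the appendix of \cite{r12}). Since $\mathcal{X}_{\alpha}$ is convex and bounded by $M$, it then suffices to take $X_{n}\in\mathcal{X}_{\alpha}$ with $X_{n}\rightarrow X$ in probability and check $X\in\mathcal{X}_{\alpha}$; but this follows at once from $K_{1}\leq X\leq K_{2}$ $\mu$-a.e. and part (ii) of Theorem \ref{representation}, which gives $\rho_{1}(X)\leq\liminf_{n}\rho_{1}(X_{n})\leq\alpha$ directly—so with this patch the Koml\'{o}s extraction and the limit-identification step become unnecessary. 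Note that the paper itself takes a different route that sidesteps sequential arguments (hence metrizability) altogether: it forms the support functional $\phi(Y)=\sup_{X\in\mathcal{X}_{\alpha}}E_{\mu}[XY]$, obtains weak$^{*}$ compactness of the polar-type set $\hat{\mathcal{X}}_{\alpha}=\{X:E_{\mu}[XY]\leq\phi(Y)\ \forall Y\in L^{1}(\mu)\}$ from a generalized Banach--Alaoglu theorem in \cite{r11}, and then proves $\mathcal{X}_{\alpha}=\hat{\mathcal{X}}_{\alpha}$ using the dual representation $\rho_{1}(\hat{X})=\sup_{P}\big(E_{P}[\hat{X}]-\rho_{1}^{\ast}(P)\big)$; both arguments ultimately rest on the same Fatou-type lower semicontinuity of $\rho_{1}$ supplied by continuity from below.
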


\begin{proof}
Define $\phi(Y)=\sup\limits_{X\in\mathcal{X}_{\alpha}}E_{\mu}[X\cdot Y]$,
where $Y\in L^{1}(\mu)$. Then $\phi$ is a sublinear function on $L^{1}(\mu)$
and dominated by $M||\cdot||_{L^{1}(\mu)}$. Set
\begin{equation}
\hat{\mathcal{X}}_{\alpha}=\{X\in L^{\infty}(\mu):E_{\mu}[X\cdot Y]\leq
\phi(Y)\ \text{for any}\ Y\in L^{1}(\mu)\}.
\end{equation}
By a generalized result of Banach-Alaoglu theorem (Theorem 4.2 of chapter I in
\cite{r11}), $\hat{\mathcal{X}}_{\alpha}$ is compact in the weak$^{\ast}$
topology $\sigma(L^{\infty}(\mu),L^{1}(\mu))$. Then we only need to show
\[
\mathcal{X}_{\alpha}=\hat{\mathcal{X}}_{\alpha}.
\]
Since $\mathcal{X}_{\alpha}\subset\hat{\mathcal{X}}_{\alpha}$ obviously, in
the next, we will show $\hat{\mathcal{X}}_{\alpha}\subset\mathcal{X}_{\alpha}$.

Firstly, for any $\hat{X}\in\hat{\mathcal{X}}_{\alpha}$, we show that
$K_{1}\leq\hat{X}\leq K_{2}$, $\mu$-a.e.. If there exists $\hat{X}\in
\hat{\mathcal{X}}_{\alpha}$ such that $\mu(\{\omega:\hat{X}(\omega
)<K_{1}\})\not =0$, then there will exist a constant $\epsilon>0$ such that
$\mu(A)\not =0$, where $A=\{\omega:\hat{X}(\omega)\leq K_{1}-\epsilon\}$. For
any $X\in\mathcal{X}_{\alpha}$, since $\hat{X}\leq K_{1}-\epsilon$ on $A$, we
have $\hat{X}\leq X-\epsilon$ on $A$. Let $h_{A}=-\dfrac{I_{A}}{\mu(A)}$.
Then
\[
E_{\mu}[\hat{X}h_{A}]=-\dfrac{1}{\mu(A)}E_{\mu}[\hat{X}I_{A}]\geq-\dfrac
{1}{\mu(A)}E_{\mu}[(X-\epsilon)I_{A}]=E_{\mu}[Xh_{A}]+\epsilon.
\]
Due to $X$ can be taken in $\mathcal{X}_{\alpha}$ arbitrarily, we have
\[
E_{\mu}[\hat{X}h_{A}]\geq\sup\limits_{X\in\mathcal{X}_{\alpha}}E_{\mu}%
[Xh_{A}]+\epsilon>\sup\limits_{X\in\mathcal{X}_{\alpha}}E_{\mu}[Xh_{A}%
]=\phi(h_{A}).
\]
Since $h_{A}\in L^{1}(\mu)$, it contradicts with $\hat{X}\in\hat{\mathcal{X}%
}_{\alpha}$. Thus, $\hat{X}\geq K_{1}$, $\mu$-a.e.. Similarly, we can prove
$\hat{X}\leq K_{2}$, $\mu$-a.e..

Next, we show for any $\hat{X}\in\hat{\mathcal{X}}_{\alpha}$, $\rho_{1}%
(\hat{X})\leq\alpha$. Since $\hat{X}\in\hat{\mathcal{X}}_{\alpha}$, for any
$P\in\mathcal{P}$,
\[
E_{P}[\hat{X}]=E_{\mu}[\hat{X}G_{P}]\leq\sup\limits_{X\in\mathcal{X}_{\alpha}%
}E_{\mu}[XG_{P}]=\sup\limits_{X\in\mathcal{X}_{\alpha}}E_{P}[X].
\]
Then
\[%
\begin{array}
[c]{r@{}l}%
\rho_{1}(\hat{X})= & \sup\limits_{P\in\mathcal{P}}\big(E_{P}[\hat{X}]-\rho
_{1}^{\ast}(P)\big)\\
\leq & \sup\limits_{P\in\mathcal{P}}\sup\limits_{X\in\mathcal{X}_{\alpha
^{\ast}}}\big(E_{P}[X]-\rho_{1}^{\ast}(P)\big)\\
= & \sup\limits_{X\in\mathcal{X}_{\alpha}}\sup\limits_{P\in\mathcal{P}%
}\big(E_{P}[X]-\rho_{1}^{\ast}(P)\big)\\
= & \sup\limits_{X\in\mathcal{X}_{\alpha}}\rho_{1}(X)\leq\alpha.
\end{array}
\]
Thus, $\hat{X}\in\mathcal{X}_{\alpha}$.
\end{proof}

\begin{remark}
If $\rho_{1}$ degenerates to be a sublinear expectation, the above result can
also be found in \cite{r10}.
\end{remark}

\begin{lemma}
\label{aux-minimax} If $\rho_{1}$ and $\rho_{2}$ are convex expectations
continuous from below, then
\begin{equation}
\inf\limits_{X\in\mathcal{X}_{\alpha}}\sup\limits_{Q\in\mathcal{Q}}%
\big(E_{Q}[K_{2}-X]-\rho_{2}^{\ast}(Q)\big)=\sup\limits_{Q\in\mathcal{Q}}%
\inf\limits_{X\in\mathcal{X}_{\alpha}}\big(E_{Q}[K_{2}-X]-\rho_{2}^{\ast
}(Q)\big). \label{3.3}%
\end{equation}

\end{lemma}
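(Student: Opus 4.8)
The plan is to read the identity (\ref{3.3}) as an instance of a minimax theorem applied to the payoff
\[
f(X,Q)=E_{Q}[K_{2}-X]-\rho_{2}^{\ast}(Q)
\]
on the product $\mathcal{X}_{\alpha}\times\mathcal{Q}$. Since $K_{1}\leq X\leq K_{2}$ forces $0\leq K_{2}-X\leq M$, and $\rho_{2}^{\ast}(Q)<\infty$ for every $Q\in\mathcal{Q}$ by the definition of $\mathcal{Q}$, the function $f$ is finite and bounded on $\mathcal{X}_{\alpha}\times\mathcal{Q}$. The inequality ``$\geq$'' in (\ref{3.3}) is automatic weak duality: for any fixed $Q_{0}\in\mathcal{Q}$ one has $\sup_{Q}f(X,Q)\geq f(X,Q_{0})$ for every $X$, hence $\inf_{X}\sup_{Q}f\geq\inf_{X}f(\cdot,Q_{0})$, and taking the supremum over $Q_{0}$ gives the claim. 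Thus the entire content of the lemma is the reverse inequality, which is precisely what a minimax theorem supplies.

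To set up the theorem I would first record the structural properties of $f$. For each fixed $Q$, the map $X\mapsto f(X,Q)=E_{\mu}[(K_{2}-X)H_{Q}]-\rho_{2}^{\ast}(Q)$ is affine, hence convex; moreover, because $H_{Q}\in L^{1}(\mu)$, it is continuous for the weak$^{\ast}$ topology $\sigma(L^{\infty}(\mu),L^{1}(\mu))$ by the very definition of that topology, and in particular lower semicontinuous. For each fixed $X$, the map $Q\mapsto f(X,Q)$ is concave: passing to densities, $Q\mapsto E_{\mu}[(K_{2}-X)H_{Q}]$ is affine since $H_{\lambda Q_{1}+(1-\lambda)Q_{2}}=\lambda H_{Q_{1}}+(1-\lambda)H_{Q_{2}}$, while $-\rho_{2}^{\ast}$ is concave because $\rho_{2}^{\ast}$ is convex on $\mathcal{M}$.

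Next I would verify the set-theoretic hypotheses. The set $\mathcal{X}_{\alpha}$ is convex (the pointwise order constraints $K_{1}\leq X\leq K_{2}$ are convex, and $\{X:\rho_{1}(X)\leq\alpha\}$ is convex because $\rho_{1}$ is convex), and by Lemma \ref{weak*-compact} it is compact in $\sigma(L^{\infty}(\mu),L^{1}(\mu))$; the set $\mathcal{Q}$ is convex. With the compact convex minimizing set $\mathcal{X}_{\alpha}$, the convex maximizing set $\mathcal{Q}$, and a payoff that is convex and lower semicontinuous in $X$ and concave in $Q$, I would invoke a minimax theorem of Kneser--Ky Fan type, which requires no topology or semicontinuity on the $Q$-variable, to conclude (\ref{3.3}) directly.

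I expect the main obstacle to be exactly this last point. Because Assumption \ref{assumption} gives only closedness under $\mu$-a.e. convergence, not weak compactness of the density sets, $\mathcal{Q}$ carries no topology in which $Q\mapsto f(X,Q)$ would be upper semicontinuous, so the familiar Sion-type formulation cannot be run on the $Q$-side. The resolution is to place all the compactness on the $X$-side---supplied by the generalized Banach--Alaoglu result behind Lemma \ref{weak*-compact}---and to select a minimax theorem whose other side demands only concavity. Checking that $X\mapsto f(X,Q)$ is genuinely weak$^{\ast}$-continuous (which hinges on $H_{Q}\in L^{1}(\mu)$) and that $\mathcal{X}_{\alpha}$ is simultaneously convex and weak$^{\ast}$-compact is then the crux that legitimizes exchanging $\inf$ and $\sup$.
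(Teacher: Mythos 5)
Your proposal is correct and takes essentially the same route as the paper: the paper's proof likewise places all the compactness on the $X$-side, invoking the weak$^{\ast}$ compactness of $\mathcal{X}_{\alpha}$ from Lemma \ref{weak*-compact} together with the linearity and weak$^{\ast}$ continuity of $X\mapsto E_{Q}[K_{2}-X]-\rho_{2}^{\ast}(Q)$ and the concavity in $Q$ on the convex set $\mathcal{Q}$ (with no topology on $\mathcal{Q}$), and then applies a Kneser--Fan type minimax theorem (Theorem 3.2 of Chapter I in \cite{r11}). Your explicit weak-duality remark and the verification that $\mathcal{X}_{\alpha}$ is convex are routine details the paper leaves implicit, but the argument is the same.
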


\begin{proof}
Since for each $X\in\mathcal{X}_{\alpha}$, $E_{Q}[K_{2}-X]-\rho_{2}^{\ast}(Q)$
is a concave function on $\mathcal{Q}$ and for each $Q\in\mathcal{Q}$,
$E_{Q}[K_{2}-X]-\rho_{2}^{\ast}(Q)$ is a linear continuous function on
$L^{\infty}(\mu)$, with $\mathcal{X}_{\alpha}$ is compact in the weak$^{\ast}$
topology $\sigma(L^{\infty}(\mu),L^{1}(\mu))$, then by the minimax theorem
(Refer to Theorem 3.2 of chapter I in \cite{r11}), the equation (\ref{3.3}) holds.
\end{proof}

The following lemma shows that $\rho^{\ast}$ is lower semi-continuous.

\begin{lemma}
\label{Fatou-property-1} If $\rho$ is a convex expectation continuous from
below, for any sequence $\{Q_{n}\}_{n\geq1}\subset\mathcal{M}$ and $Q_{0}%
\in\mathcal{M}$ such that $H_{Q_{n}}$ converges to $H_{Q_{0}}$, $\mu$-a.e.,
then
\[
\rho^{\ast}(Q_{0})\leq\liminf\limits_{n\rightarrow\infty}\rho^{\ast}(Q_{n}).
\]

\end{lemma}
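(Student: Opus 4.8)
The plan is to realize $\rho^{\ast}$ as a pointwise supremum of functionals that are each continuous along the given convergence, so that the lower semicontinuity is just the standard fact that a supremum of lower semicontinuous functions is lower semicontinuous. Recall from Theorem \ref{representation} that $\rho^{\ast}(Q)=\sup_{X\in L^{\infty}(\mu)}\big(E_{Q}[X]-\rho(X)\big)$, and that $E_{Q}[X]=E_{\mu}[X H_{Q}]$. Thus for each fixed $X\in L^{\infty}(\mu)$ the map $Q\mapsto E_{Q}[X]-\rho(X)$ is the elementary building block, and it remains to control how it behaves as $H_{Q_{n}}\to H_{Q_{0}}$ $\mu$-a.e.

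The crucial first step is to upgrade the $\mu$-a.e. convergence $H_{Q_{n}}\to H_{Q_{0}}$ into $L^{1}(\mu)$ convergence. Since $Q_{n},Q_{0}\in\mathcal{M}$, each density is nonnegative and satisfies $E_{\mu}[H_{Q_{n}}]=E_{\mu}[H_{Q_{0}}]=1$, so the total masses coincide and in particular $\int H_{Q_{n}}\,d\mu\to\int H_{Q_{0}}\,d\mu$. By Scheff\'e's lemma, $\mu$-a.e. convergence of nonnegative densities together with convergence of the integrals forces $\|H_{Q_{n}}-H_{Q_{0}}\|_{L^{1}(\mu)}\to 0$. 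With this $L^{1}$ convergence in hand, fix any $X\in L^{\infty}(\mu)$; then $|E_{Q_{n}}[X]-E_{Q_{0}}[X]|=|E_{\mu}[X(H_{Q_{n}}-H_{Q_{0}})]|\leq\|X\|_{\infty}\,\|H_{Q_{n}}-H_{Q_{0}}\|_{L^{1}(\mu)}\to 0$, so $E_{Q_{n}}[X]\to E_{Q_{0}}[X]$.

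The conclusion then follows by the soft supremum argument. For each fixed $X\in L^{\infty}(\mu)$, since $E_{Q_{n}}[X]-\rho(X)\leq\rho^{\ast}(Q_{n})$ for every $n$, passing to the limit gives $E_{Q_{0}}[X]-\rho(X)=\lim_{n\to\infty}\big(E_{Q_{n}}[X]-\rho(X)\big)\leq\liminf_{n\to\infty}\rho^{\ast}(Q_{n})$. Taking the supremum over all $X\in L^{\infty}(\mu)$ on the left-hand side yields $\rho^{\ast}(Q_{0})\leq\liminf_{n\to\infty}\rho^{\ast}(Q_{n})$, which is the claim. I expect the only genuine obstacle to be the passage from $\mu$-a.e. to $L^{1}$ convergence of the densities: a.e. convergence by itself does not permit passing to the limit inside $E_{Q_{n}}[X]$, since mass could a priori be lost, and it is precisely the equality of total masses (both densities integrate to one) that rescues the argument via Scheff\'e's lemma. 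Everything after that reduction is routine.
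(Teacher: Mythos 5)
Your proof is correct, but it takes a genuinely different route from the paper's. You first upgrade the $\mu$-a.e.\ convergence to $L^{1}(\mu)$ convergence via Scheff\'e's lemma, using that all the densities are nonnegative with $E_{\mu}[H_{Q_{n}}]=E_{\mu}[H_{Q_{0}}]=1$; then each map $Q\mapsto E_{Q}[X]-\rho(X)$ is continuous along the sequence, and $\rho^{\ast}$, as a supremum of such maps, is lower semicontinuous --- this part is indeed routine and you execute it correctly. The paper instead avoids Scheff\'e entirely: using cash invariance it first observes that the supremum defining $\rho^{\ast}$ may be restricted to $X\in L_{+}^{\infty}(\mu)$, then sets $J_{k}=\inf_{n\geq k}H_{Q_{n}}$, so that $J_{k}\uparrow H_{Q_{0}}$, and argues by monotone convergence together with the elementary interchange $\sup_{X}\inf_{n\geq k}\leq\inf_{n\geq k}\sup_{X}$ (the restriction to $X\geq 0$ is exactly what makes $E_{\mu}[XJ_{k}]\leq\inf_{n\geq k}E_{Q_{n}}[X]$ go in the right direction). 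The trade-off: your argument is shorter and leans on a standard textbook lemma, but it uses the normalization of the densities in an essential way; the paper's argument never uses the total masses and only needs $\liminf_{n}H_{Q_{n}}\geq H_{Q_{0}}$ $\mu$-a.e., so it is structurally a pure Fatou-type argument that would survive in settings where mass can be lost in the limit (e.g.\ sub-probability limits), where Scheff\'e's lemma is unavailable. Within the lemma's stated hypotheses both proofs are complete, and note that, like the paper's, your proof never actually invokes continuity from below --- that hypothesis is inherited from the surrounding representation framework rather than used in the lower semicontinuity argument itself.
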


\begin{proof}
Set
\[
L_{+}^{\infty}(\mu)=\{X\in L^{\infty}(\mu):X\geq0\}.
\]
Then $\rho^{\ast}$ can be redefined as
\[
\rho^{\ast}(Q)=\sup\limits_{X\in L_{+}^{\infty}(\mu)}\big(E_{Q}[X]-\rho
(X)\big),
\]
since $E_{Q}[X]-\rho(X)=E_{Q}[X+m]-\rho(X+m)$ for any $Q\in\mathcal{M}$, $X\in
L^{\infty}(\mu)$ and positive real number $m$.

Take $J_{k}=\inf\limits_{n\geq k}H_{Q_{n}}$. Then $\{J_{k}\}_{k\geq1}$ is an
increasing sequence and $H_{Q_{0}}=\sup\limits_{k\geq1}J_{k}$. We have
\[%
\begin{array}
[c]{r@{}l}%
\rho^{\ast}(Q_{0})= & \sup\limits_{X\in L_{+}^{\infty}(\mu)}\big(E_{\mu
}[X(\sup\limits_{k\geq1}J_{k})]-\rho(X)\big)\\
= & \sup\limits_{k\geq1}\sup\limits_{X\in L_{+}^{\infty}(\mu)}\big(E_{\mu
}[XJ_{k}]-\rho(X)\big)\\
= & \sup\limits_{k\geq1}\sup\limits_{X\in L_{+}^{\infty}(\mu)}\big(E_{\mu
}[\inf\limits_{n\geq k}(XH_{Q_{n}})]-\rho(X)\big)\\
\leq & \sup\limits_{k\geq1}\sup\limits_{X\in L_{+}^{\infty}(\mu)}%
\inf\limits_{n\geq k}\big(E_{Q_{n}}[X]-\rho(X)\big)\\
\leq & \sup\limits_{k\geq1}\inf\limits_{n\geq k}\sup\limits_{X\in
L_{+}^{\infty}(\mu)}\big(E_{Q_{n}}[X]-\rho(X)\big)\\
= & \liminf\limits_{n\rightarrow\infty}\rho^{\ast}(Q_{n}).
\end{array}
\]
This completes the proof.
\end{proof}

\begin{lemma}
\label{auxiliary-result} If $\rho_{1}$ and $\rho_{2}$ are convex expectations
continuous from below and Assumption \ref{assumption} holds, then there exists
$Q^{\ast}\in\mathcal{Q}$ such that
\begin{equation}
\inf\limits_{X\in\mathcal{X}_{\alpha}}E_{Q^{\ast}}[K_{2}-X]-\rho_{2}^{\ast
}(Q^{\ast})=\sup\limits_{Q\in\mathcal{Q}}\inf\limits_{X\in\mathcal{X}_{\alpha
}}\big(E_{Q}[K_{2}-X]-\rho_{2}^{\ast}(Q)\big). \label{equation-3.5}%
\end{equation}

\end{lemma}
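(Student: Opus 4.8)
The plan is to show that the supremum on the right-hand side of (\ref{equation-3.5}) is attained. Write $F(Q)=\inf_{X\in\mathcal{X}_{\alpha}}E_{Q}[K_{2}-X]-\rho_{2}^{\ast}(Q)$ for $Q\in\mathcal{Q}$ and $S=\sup_{Q\in\mathcal{Q}}F(Q)$, and first record two structural facts. Since $H_{Q}\mapsto E_{Q}[K_{2}-X]$ is linear and $\rho_{2}^{\ast}$ is convex, $F$ is a concave functional of the density $H_{Q}$, and each level set $\mathcal{H}_{v}$ is convex. I would also note the a priori bounds $0\le\inf_{X}E_{Q}[K_{2}-X]\le E_{Q}[K_{2}-K_{1}]\le M$ (using that $K_{1}\in\mathcal{X}_{\alpha}$, which is valid since $\rho_{1}(K_{1})\le\alpha$), together with $S=\beta=\inf_{X}\rho_{2}(K_{2}-X)\ge\rho_{2}(0)$ via Lemma \ref{aux-minimax}, the last inequality by monotonicity since $K_{2}-X\ge 0$ on $\mathcal{X}_{\alpha}$.

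Next I would take a maximizing sequence $\{Q_{n}\}\subset\mathcal{Q}$ with $F(Q_{n})\to S$. From $F(Q_{n})\le M-\rho_{2}^{\ast}(Q_{n})$ and $S\ge\rho_{2}(0)$ I obtain, for all large $n$, $\rho_{2}^{\ast}(Q_{n})\le M-F(Q_{n})<M-\rho_{2}(0)+1<v$, so that $H_{Q_{n}}\in\mathcal{H}_{v}$ after discarding finitely many initial terms. Applying the Koml\'{o}s theorem to $\{H_{Q_{n}}\}$ yields a subsequence $\{H_{Q_{n_{i}}}\}$ whose Ces\`{a}ro means $\bar{H}_{k}=\frac{1}{k}\sum_{i=1}^{k}H_{Q_{n_{i}}}$ converge $\mu$-a.e. to some $H_{0}$. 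By convexity each $\bar{H}_{k}\in\mathcal{H}_{v}$, and since $\mathcal{H}_{v}$ is closed under $\mu$-a.e. convergence (Assumption \ref{assumption}), $H_{0}=H_{Q_{0}}$ for some $Q_{0}\in\mathcal{Q}$. Because the $\bar{H}_{k}$ and $H_{Q_{0}}$ are all densities (nonnegative, of $\mu$-integral $1$), Scheff\'{e}'s lemma upgrades the a.e. convergence to convergence in $L^{1}(\mu)$ norm. Writing $\bar{Q}_{k}$ for the measure with density $\bar{H}_{k}$, concavity of $F$ gives $F(\bar{Q}_{k})\ge\frac{1}{k}\sum_{i=1}^{k}F(Q_{n_{i}})\to S$, so $\{\bar{Q}_{k}\}$ is again a maximizing sequence with $F(\bar{Q}_{k})\to S$.

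Finally I would combine the two semicontinuity lemmas along $\{\bar{Q}_{k}\}$. Lemma \ref{Fatou-property-2} (using the $L^{1}$ convergence) gives $\inf_{X}E_{Q_{0}}[K_{2}-X]\ge\limsup_{k}\inf_{X}E_{\bar{Q}_{k}}[K_{2}-X]$, while Lemma \ref{Fatou-property-1} (using the a.e. convergence) gives $\rho_{2}^{\ast}(Q_{0})\le\liminf_{k}\rho_{2}^{\ast}(\bar{Q}_{k})$. Since $\limsup_{k}a_{k}-\liminf_{k}c_{k}\ge\limsup_{k}(a_{k}-c_{k})$, these combine to $F(Q_{0})\ge\limsup_{k}F(\bar{Q}_{k})=S$, and as $F(Q_{0})\le S$ trivially, $F(Q_{0})=S$; thus $Q^{\ast}=Q_{0}$ is the required maximizer.

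I expect the main obstacle to be producing a single limit point $Q_{0}\in\mathcal{Q}$ along which both semicontinuity lemmas apply, since one lemma needs $L^{1}$ convergence and the other $\mu$-a.e. convergence of the densities, while $\mathcal{Q}$ is not weakly compact. The resolution is the Koml\'{o}s--convexity--closedness argument to remain inside $\mathcal{H}_{v}$ and identify $Q_{0}$, together with Scheff\'{e}'s lemma to secure both modes of convergence at once; consequently the crucial preliminary step is the a priori estimate $\rho_{2}^{\ast}(Q_{n})<v$ that confines the maximizing sequence to the level set $\mathcal{H}_{v}$ and makes this machinery available.
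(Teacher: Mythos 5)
Your proof is correct and follows essentially the same route as the paper's: a maximizing sequence confined to $\mathcal{H}_{v}$ via the a priori bound $\rho_{2}^{\ast}(Q_{n})\leq M-\rho_{2}(0)+1<v$, Koml\'{o}s plus convexity and $\mu$-a.e. closedness of $\mathcal{H}_{v}$ to produce $Q_{0}\in\mathcal{Q}$, the Scheff\'{e} upgrade to $L^{1}$ convergence, and then Lemmas \ref{Fatou-property-2} and \ref{Fatou-property-1} to conclude. The only difference is cosmetic: you pass to the Ces\`{a}ro means as a new maximizing sequence via concavity of $F$, where the paper instead fixes a $\gamma-\epsilon/2^{n}$ maximizing sequence and threads the averaging (using convexity of $\rho_{2}^{\ast}$) through its final chain of inequalities.
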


\begin{proof}
Take a positive constant $0<\epsilon<1$ and a sequence $\{Q_{n}\}_{n\geq
1}\subset\mathcal{Q}$ such that
\[
\inf\limits_{X\in\mathcal{X}_{\alpha}}E_{Q_{n}}[K_{2}-X]-\rho_{2}^{\ast}%
(Q_{n})\geq\gamma-\frac{\epsilon}{2^{n}},
\]
where $\gamma=\sup\limits_{Q\in\mathcal{Q}}\inf\limits_{X\in\mathcal{X}%
_{\alpha}}\big(E_{Q}[K_{2}-X]-\rho_{2}^{\ast}(Q)\big)$. By Lemma
\ref{aux-minimax},
\[
\gamma=\inf\limits_{X\in\mathcal{X}_{\alpha}}\sup\limits_{Q\in\mathcal{Q}%
}\big(E_{Q}[K_{2}-X]-\rho_{2}^{\ast}(Q)\big)=\inf\limits_{X\in\mathcal{X}%
_{\alpha}}\rho_{2}(K_{2}-X).
\]
Since
\[
\rho_{2}(0)\leq\inf\limits_{X\in\mathcal{X}_{\alpha}}\rho_{2}(K_{2}-X),
\]
then $\rho_{2}(0)\leq\gamma$. For any $n$,
\[
M-\rho_{2}^{\ast}(Q_{n})\geq\inf\limits_{X\in\mathcal{X}_{\alpha}}E_{Q_{n}%
}[K_{2}-X]-\rho_{2}^{\ast}(Q_{n})\geq\gamma-\frac{\epsilon}{2^{n}}\geq
\gamma-\epsilon,
\]
which leads to
\[
\rho_{2}^{\ast}(Q_{n})\leq M-\gamma+\epsilon\leq M-\rho_{2}(0)+1.
\]
For $v$ defined in Assumption \ref{assumption}, we have $\rho_{2}^{\ast}%
(Q_{n})\leq v$ which implies $\{H_{Q_{n}}\}_{n\geq1}\subset\mathcal{H}_{v}$.

By the Koml\'{o}s Theorem, there exist a subsequence $\{Q_{n_{i}}\}_{i\geq1}$
of $\{Q_{n}\}_{n\geq1}$ and a random variable $H^{\ast}\in L^{1}(\mu)$ such
that
\[
\lim_{k\rightarrow\infty}\frac{1}{k}\sum_{i=1}^{k}H_{Q_{n_{i}}}=H^{\ast}%
,\quad\mu-a.e..
\]
Since $\mathcal{H}_{v}$ is a convex set and closed under the $\mu$-a.e.
convergence, then $H^{\ast}\in\mathcal{H}_{v}$. Denote $Q^{\ast}$ as the
corresponding probability measure of $H^{\ast}$. Since
\[
\lim_{k\rightarrow\infty}\frac{1}{k}\sum_{i=1}^{k}H_{Q_{n_{i}}}=H^{\ast}%
,\quad\mu-a.e.
\]
and
\[
1=E_{\mu}[H^{\ast}]=\lim_{k\rightarrow\infty}E_{\mu}[\frac{1}{k}\sum_{i=1}%
^{k}H_{Q_{n_{i}}}],
\]
we have $\{\frac{1}{k}\sum_{i=1}^{k}H_{Q_{n_{i}}}\}_{k\geq1}$ converges to
$H^{\ast}$ under $L^{1}(\mu)$ norm. By Lemma \ref{Fatou-property-2} and Lemma
\ref{Fatou-property-1},
\[%
\begin{array}
[c]{r@{}l}
& \inf\limits_{X\in\mathcal{X}_{\alpha}}E_{Q^{\ast}}[K_{2}-X]-\rho_{2}^{\ast
}(Q^{\ast})\\
\geq & \limsup\limits_{k\rightarrow\infty}\inf\limits_{X\in\mathcal{X}%
_{\alpha}}E_{\mu}[(K_{2}-X)(\dfrac{1}{k}\sum\limits_{i=1}^{k}H_{Q_{n_{i}}%
})]-\liminf\limits_{k\rightarrow\infty}\rho_{2}^{\ast}(\dfrac{1}{k}%
\sum\limits_{i=1}^{k}Q_{n_{i}})\\
\geq & \limsup\limits_{k\rightarrow\infty}\inf\limits_{X\in\mathcal{X}%
_{\alpha}}\dfrac{1}{k}\sum\limits_{i=1}^{k}\big(E_{Q_{n_{i}}}[(K_{2}%
-X)]-\rho_{2}^{\ast}(Q_{n_{i}})\big)\\
\geq & \limsup\limits_{k\rightarrow\infty}\dfrac{1}{k}\sum\limits_{i=1}%
^{k}\inf\limits_{X\in\mathcal{X}_{\alpha}}\big(E_{Q_{n_{i}}}[(K_{2}%
-X)]-\rho_{2}^{\ast}(Q_{n_{i}})\big)\\
\geq & \lim\limits_{k\rightarrow\infty}(\gamma-\dfrac{\epsilon}{k})=\gamma.
\end{array}
\]
Since $Q^{\ast}\in\mathcal{Q}$, we have
\[
\inf\limits_{X\in\mathcal{X}_{\alpha}}E_{Q^{\ast}}[K_{2}-X]-\rho_{2}^{\ast
}(Q^{\ast})=\sup\limits_{Q\in\mathcal{Q}}\inf\limits_{X\in\mathcal{X}_{\alpha
}}\big(E_{Q}[K_{2}-X]-\rho_{2}^{\ast}(Q)\big).
\]
This completes the proof.
\end{proof}

Summarizing all the lemmas above, we obtain the following proof of Theorem
\ref{minimax-result}:

\begin{proof}
By Lemma \ref{auxiliary-result}, there exists $Q^{\ast}\in\mathcal{Q}$ such
that
\[
\inf\limits_{X\in\mathcal{X}_{\alpha}}E_{Q^{\ast}}[K_{2}-X]-\rho_{2}^{\ast
}(Q^{\ast})=\sup\limits_{Q\in\mathcal{Q}}\inf\limits_{X\in\mathcal{X}_{\alpha
}}\big(E_{Q}[K_{2}-X]-\rho_{2}^{\ast}(Q)\big).
\]
If $X^{\ast}$ is the optimal test of (\ref{initial-problem}), then
\[
\sup\limits_{Q\in\mathcal{Q}}\big(E_{Q}[K_{2}-X^{\ast}]-\rho_{2}^{\ast
}(Q)\big)=\inf\limits_{X\in\mathcal{X}_{\alpha}}\sup\limits_{Q\in\mathcal{Q}%
}\big(E_{Q}[K_{2}-X]-\rho_{2}^{\ast}(Q)\big).
\]
By Lemma \ref{aux-minimax},
\[
\inf\limits_{X\in\mathcal{X}_{\alpha}}\sup\limits_{Q\in\mathcal{Q}}%
\big(E_{Q}[K_{2}-X]-\rho_{2}^{\ast}(Q)\big)=\sup\limits_{Q\in\mathcal{Q}}%
\inf\limits_{X\in\mathcal{X}_{\alpha}}\big(E_{Q}[K_{2}-X]-\rho_{2}^{\ast
}(Q)\big).
\]
Thus,
\[
\inf\limits_{X\in\mathcal{X}_{\alpha}}E_{Q^{\ast}}[K_{2}-X]-\rho_{2}^{\ast
}(Q^{\ast})=\sup\limits_{Q\in\mathcal{Q}}\big(E_{Q}[K_{2}-X^{\ast}]-\rho
_{2}^{\ast}(Q)\big).
\]
Since
\[
\inf\limits_{X\in\mathcal{X}_{\alpha}}E_{Q^{\ast}}[K_{2}-X]-\rho_{2}^{\ast
}(Q^{\ast})\leq E_{Q^{\ast}}[K_{2}-X^{\ast}]-\rho_{2}^{\ast}(Q^{\ast})\leq
\sup\limits_{Q\in\mathcal{Q}}\big(E_{Q}[K_{2}-X^{\ast}]-\rho_{2}^{\ast
}(Q)\big),
\]
then
\[
E_{Q^{\ast}}[K_{2}-X^{\ast}]-\rho_{2}^{\ast}(Q^{\ast})=\inf\limits_{X\in
\mathcal{X}_{\alpha}}E_{Q^{\ast}}[K_{2}-X]-\rho_{2}^{\ast}(Q^{\ast}),
\]
i.e.,
\[
E_{Q^{\ast}}[K_{2}-X^{\ast}]=\inf\limits_{X\in\mathcal{X}_{\alpha}}E_{Q^{\ast
}}[K_{2}-X].
\]
This completes the proof.
\end{proof}

\begin{example}
\label{example-helpless} Consider the probability space $(\Omega
,\mathcal{F},\mu)$, where $\Omega=\{0,1\}$, $\mathcal{F}=\{\emptyset
,\{0\},\{1\},\Omega\}$. Set
\[
\mu(\omega)=\Bigg\{%
\begin{array}
[c]{l@{}c}%
\frac{1}{2}, & \quad\text{if }\omega=0,\\
\frac{1}{2}, & \quad\text{if }\omega=1,
\end{array}
\text{\ \ and\ \ }Q_{0}(\omega)=\Bigg\{%
\begin{array}
[c]{l@{}c}%
\frac{3}{4}, & \quad\text{if }\omega=0,\\
\frac{1}{4}, & \quad\text{if }\omega=1
\end{array}
.
\]
Let $K_{1}=0$, $K_{2}=1$, $\alpha=\frac{1}{2}$, $\rho_{1}(X)=E_{\mu}[X]$ and
$\rho_{2}(X)=\ln E_{Q_{0}}[e^{X}]$. We solve problem (\ref{initial-problem}).
Let $Q=qI_{\{0\}}+(1-q)I_{\{1\}}$, where $0\leq q\leq1$. Then
\[
\rho_{2}^{\ast}(Q)=E_{Q_{0}}[\frac{dQ}{dQ_{0}}\ln\frac{dQ}{dQ_{0}}]=q\ln
q+(1-q)\ln(1-q)-q\ln3+2\ln2.
\]
Let $X=x_{0}I_{\{0\}}+x_{1}I_{\{1\}}$, where $0\leq x_{0},$ $x_{1}\leq1$. If
$X\in\mathcal{X}_{\alpha}$, then $\frac{1}{2}x_{0}+\frac{1}{2}x_{1}\leq
\frac{1}{2}$, i.e., $x_{0}\leq1-x_{1}$. When $q=\frac{3}{e+3}$, $\sup\limits_{Q\in\mathcal{Q}}\inf
\limits_{X\in\mathcal{X}_{\alpha}}E_{Q}[1-X]-\rho_{2}^{\ast}(Q)$ attains its
maximum on $\mathcal{Q}$. Thus,
\[
Q^{\ast}=\frac{3}{e+3}I_{\{0\}}+\frac{e}{e+3}I_{\{1\}}\text{ and }X^{\ast
}=I_{\{0\}}.
\]

\end{example}

\subsection{The existence of a representative probability $P^{\ast}$}

In the rest of this paper, $Q^{\ast}$ is always\ the probability measure found
in Theorem \ref{minimax-result}. Define
\[
\gamma_{\alpha}=\inf\limits_{X\in\mathcal{X}_{\alpha}}E_{Q^{\ast}}[K_{2}-X].
\]
If $\gamma_{\alpha}=0$, then it is trivial and the optimal test $X^{\ast
}=K_{2}$, $Q^{\ast}$-a.e.. In the following, we only consider the case
$\gamma_{\alpha}>0$.

\begin{lemma}
\label{dual-problem} If $\gamma_{\alpha}>0$, $\rho_{1}$ and $\rho_{2}$ are
convex expectations continuous from below and Assumption \ref{assumption}
holds, then for any optimal test $X^{\ast}$ of (\ref{initial-problem}), we
have $X^{\ast}\in\mathcal{X}^{\gamma_{\alpha}}$ and
\begin{equation}
\rho_{1}(X^{\ast})=\inf\limits_{X\in\mathcal{X}^{\gamma_{\alpha}}}\rho
_{1}(X)=\alpha, \label{2nd-dual-problem}%
\end{equation}
where $\mathcal{X}^{\gamma_{\alpha}}=\{X:E_{Q^{\ast}}[K_{2}-X]\leq
\gamma_{\alpha},K_{1}\leq X\leq K_{2},X\in L^{\infty}(\mu)\}$.
\end{lemma}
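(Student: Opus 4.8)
The plan is to derive everything from a single one-parameter perturbation that pushes a test towards the upper bound $K_2$. The inclusion $X^{\ast}\in\mathcal{X}^{\gamma_{\alpha}}$ is immediate: by Theorem \ref{minimax-result} any optimal test satisfies $E_{Q^{\ast}}[K_2-X^{\ast}]=\inf_{X\in\mathcal{X}_{\alpha}}E_{Q^{\ast}}[K_2-X]=\gamma_{\alpha}$, and since $X^{\ast}\in\mathcal{X}_{\alpha}$ we also have $K_1\le X^{\ast}\le K_2$; thus $X^{\ast}$ meets both defining constraints of $\mathcal{X}^{\gamma_{\alpha}}$.

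For the equality $\rho_{1}(X^{\ast})=\alpha$, I would argue by contradiction, assuming $\rho_{1}(X^{\ast})<\alpha$ (the reverse inequality $\rho_{1}(X^{\ast})\le\alpha$ being built into membership in $\mathcal{X}_{\alpha}$). Given such an $X^{\ast}$, set $X_t=(1-t)X^{\ast}+tK_2$ for $t\in(0,1)$. Since $K_1\le X^{\ast}\le K_2$ one checks directly that $K_1\le X_t\le K_2$, and convexity of $\rho_1$ gives $\rho_{1}(X_t)\le(1-t)\rho_{1}(X^{\ast})+t\rho_{1}(K_2)$. Because $\rho_{1}(X^{\ast})<\alpha\le\rho_{1}(K_2)$, this upper bound stays below $\alpha$ for all sufficiently small $t>0$, so $X_t\in\mathcal{X}_{\alpha}$. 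On the other hand $E_{Q^{\ast}}[K_2-X_t]=(1-t)E_{Q^{\ast}}[K_2-X^{\ast}]=(1-t)\gamma_{\alpha}$, which is strictly less than $\gamma_{\alpha}$ precisely because $\gamma_{\alpha}>0$. This contradicts the definition of $\gamma_{\alpha}$ as the infimum of $E_{Q^{\ast}}[K_2-X]$ over $\mathcal{X}_{\alpha}$, forcing $\rho_{1}(X^{\ast})=\alpha$.

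The dual minimization $\rho_{1}(X^{\ast})=\inf_{X\in\mathcal{X}^{\gamma_{\alpha}}}\rho_{1}(X)$ then splits into two inequalities. The bound $\inf_{X\in\mathcal{X}^{\gamma_{\alpha}}}\rho_{1}(X)\le\rho_{1}(X^{\ast})=\alpha$ is immediate from $X^{\ast}\in\mathcal{X}^{\gamma_{\alpha}}$. For the reverse, suppose some $X\in\mathcal{X}^{\gamma_{\alpha}}$ had $\rho_{1}(X)<\alpha$. Then $X\in\mathcal{X}_{\alpha}$, so $E_{Q^{\ast}}[K_2-X]\ge\gamma_{\alpha}$ by the definition of $\gamma_{\alpha}$, while membership in $\mathcal{X}^{\gamma_{\alpha}}$ gives $E_{Q^{\ast}}[K_2-X]\le\gamma_{\alpha}$; hence $E_{Q^{\ast}}[K_2-X]=\gamma_{\alpha}$. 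Applying exactly the same perturbation $X_t=(1-t)X+tK_2$ produces an element of $\mathcal{X}_{\alpha}$ with $E_{Q^{\ast}}[K_2-X_t]=(1-t)\gamma_{\alpha}<\gamma_{\alpha}$, again contradicting the definition of $\gamma_{\alpha}$. Therefore $\rho_{1}(X)\ge\alpha$ for every $X\in\mathcal{X}^{\gamma_{\alpha}}$, and combining the two inequalities yields the claim.

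The only delicate point is ensuring that the perturbed test remains feasible, i.e. that $\rho_{1}(X_t)\le\alpha$ persists for small $t$; this is exactly where convexity of $\rho_{1}$ together with the normalization $\rho_{1}(K_1)\le\alpha\le\rho_{1}(K_2)$ enters, and no appeal to continuity of $\rho_{1}$ is needed. I expect this feasibility check, rather than the strict decrease of the objective, to be the step requiring the most care, since one must confirm that the convex-combination bound lands at or below $\alpha$ in both the borderline case $\rho_{1}(K_2)=\alpha$ and the strict case $\rho_{1}(K_2)>\alpha$.
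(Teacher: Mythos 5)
Your proposal is correct, and it reaches the conclusion by a genuinely different perturbation than the paper. Both proofs share the same skeleton --- get $X^{\ast}\in\mathcal{X}^{\gamma_{\alpha}}$ from Theorem \ref{minimax-result}, then show that no feasible $X$ with $\rho_{1}(X)<\alpha$ can satisfy $E_{Q^{\ast}}[K_{2}-X]\leq\gamma_{\alpha}$ by pushing $X$ toward $K_{2}$ --- but the perturbations differ. The paper shifts additively and caps: $X^{\prime\prime}=(X^{\prime}+\alpha-\alpha^{\prime})\wedge K_{2}$, which exploits translation invariance (property (ii)) and monotonicity. Because the cap can leave $X^{\prime}$ unchanged wherever $X^{\prime}=K_{2}$, this perturbation need not strictly decrease the objective, so the paper must finish with a measure-theoretic step: $E_{Q^{\ast}}[X^{\prime\prime}]=E_{Q^{\ast}}[X^{\prime}]$ together with $X^{\prime\prime}\geq X^{\prime}$ forces $X^{\prime}=K_{2}$, $Q^{\ast}$-a.e., hence $\gamma_{\alpha}=0$, contradicting $\gamma_{\alpha}>0$. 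You instead take the convex combination $X_{t}=(1-t)X+tK_{2}$, which uses convexity (property (iii)) plus the standing normalization $\alpha\leq\rho_{1}(K_{2})$; since $K_{2}-X_{t}=(1-t)(K_{2}-X)$ identically, the strict decrease $(1-t)\gamma_{\alpha}<\gamma_{\alpha}$ is automatic from $\gamma_{\alpha}>0$ and no a.e.\ analysis is needed. The trade-off: your route is shorter and does not use translation invariance at all (and your appeal to $\alpha\leq\rho_{1}(K_{2})$ is harmless --- indeed $\gamma_{\alpha}>0$ already forces $\alpha<\rho_{1}(K_{2})$, since otherwise $K_{2}\in\mathcal{X}_{\alpha}$ and $\gamma_{\alpha}=0$), whereas the paper's route does not use convexity at this step, only monotonicity and cash invariance. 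One cosmetic remark: your separate verification that $\rho_{1}(X^{\ast})=\alpha$ is subsumed by your general claim that $\rho_{1}\geq\alpha$ on $\mathcal{X}^{\gamma_{\alpha}}$ combined with $X^{\ast}\in\mathcal{X}^{\gamma_{\alpha}}\cap\mathcal{X}_{\alpha}$, so it could be folded in, exactly as the paper does.
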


\begin{proof}
$X^{\ast}\in\mathcal{X}^{\gamma_{\alpha}}$ comes from Theorem
\ref{minimax-result}. For any $X\in\mathcal{X}_{\alpha}$, if $\rho
_{1}(X)<\alpha$, we claim $E_{Q^{\ast}}[K_{2}-X]>\gamma_{\alpha}$. If not,
then there will exist a test $X^{\prime}\in\mathcal{X}_{\alpha}$ such that
$\rho_{1}(X^{\prime})<\alpha$ and
\[
E_{Q^{\ast}}[K_{2}-X^{\prime}]=\gamma_{\alpha}.
\]
Set
\[
\rho_{1}(X^{\prime})=\alpha^{\prime}<\alpha
\]
and
\[
X^{\prime\prime}=(X^{\prime}+\alpha-\alpha^{\prime})\wedge K_{2}.
\]
By the definition of convex expectation,
\[
\rho_{1}(X^{\prime\prime})\leq\rho_{1}(X^{\prime}+\alpha-\alpha^{\prime}%
)=\rho_{1}(X^{\prime})+\alpha-\alpha^{\prime}=\alpha,
\]
which implies that $X^{\prime\prime}\in\mathcal{X}_{\alpha}$. As
$X^{\prime\prime}\in\mathcal{X}_{\alpha}$ and $X^{\prime\prime}\geq X^{\prime
}$ we have $E_{Q^{\ast}}[K_{2}-X^{\prime\prime}]=E_{Q^{\ast}}[K_{2}-X^{\prime
}]$, i.e., $E_{Q^{\ast}}[X^{\prime\prime}]=E_{Q^{\ast}}[X^{\prime}]$. Set
$A=\{X^{\prime}\not =K_{2}\}$. Since
\[
X^{\prime\prime}-X^{\prime}\geq0\quad\text{and}\quad E_{Q^{\ast}}%
[X^{\prime\prime}-X^{\prime}]=0,
\]
we have $X^{\prime\prime}=X^{\prime}$, $Q^{\ast}$-a.e., which implies that
$Q^{\ast}(A)=0$ and $X^{\prime}=K_{2}$, $Q^{\ast}$-a.e.. Then $\gamma_{\alpha
}=0$ which contradicts with $\gamma_{\alpha}>0$.

Thus, for any $X\in\mathcal{X}^{\gamma_{\alpha}}$, we have $\rho_{1}%
(X)\geq\alpha$. With $\rho_{1}(X^{\ast})=\alpha$, the result holds.
\end{proof}

\begin{theorem}
\label{Second-minimax-result} Suppose that $\gamma_{\alpha}>0$, $\rho_{1}$ and
$\rho_{2}$ are convex expectations continuous from below and Assumption
\ref{assumption} holds. Then there exists $P^{\ast}\in\mathcal{P}$ such that
for any optimal test $X^{\ast}$ of (\ref{initial-problem}),
\[
E_{P^{\ast}}[X^{\ast}]=\inf\limits_{X\in\mathcal{X}^{\gamma_{\alpha}}%
}E_{P^{\ast}}[X].
\]

\end{theorem}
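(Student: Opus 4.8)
\textbf{Proof proposal for Theorem \ref{Second-minimax-result}.}

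The plan is to mirror the strategy that produced $Q^{\ast}$ in Theorem \ref{minimax-result}, but now applied to the dual problem identified in Lemma \ref{dual-problem}. By that lemma, for any optimal test $X^{\ast}$ we have $X^{\ast}\in\mathcal{X}^{\gamma_{\alpha}}$ and $\rho_{1}(X^{\ast})=\inf_{X\in\mathcal{X}^{\gamma_{\alpha}}}\rho_{1}(X)=\alpha$. So the natural object to analyze is
\[
\inf\limits_{X\in\mathcal{X}^{\gamma_{\alpha}}}\rho_{1}(X)=\inf\limits_{X\in\mathcal{X}^{\gamma_{\alpha}}}\sup\limits_{P\in\mathcal{P}}\big(E_{P}[X]-\rho_{1}^{\ast}(P)\big),
\]
and the goal is to produce a single $P^{\ast}\in\mathcal{P}$ realizing the supremum in a minimax sense, so that $X^{\ast}$ minimizes $E_{P^{\ast}}[X]$ over $\mathcal{X}^{\gamma_{\alpha}}$. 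First I would establish that $\mathcal{X}^{\gamma_{\alpha}}$ is compact in the weak$^{\ast}$ topology $\sigma(L^{\infty},L^{1})$. This should follow by the same Banach-Alaoglu argument as in Lemma \ref{weak*-compact}: the constraints $K_{1}\leq X\leq K_{2}$ and the single linear inequality $E_{Q^{\ast}}[K_{2}-X]\leq\gamma_{\alpha}$ (equivalently $E_{\mu}[X H_{Q^{\ast}}]\geq E_{\mu}[K_{2}H_{Q^{\ast}}]-\gamma_{\alpha}$, with $H_{Q^{\ast}}\in L^{1}(\mu)$) define a set that is an intersection of weak$^{\ast}$-closed sets inside the weak$^{\ast}$-compact order interval $[K_{1},K_{2}]$, hence weak$^{\ast}$-compact.

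With compactness in hand, I would invoke the minimax theorem (Theorem 3.2 of chapter I in \cite{r11}) exactly as in Lemma \ref{aux-minimax}: the map $(X,P)\mapsto E_{P}[X]-\rho_{1}^{\ast}(P)$ is weak$^{\ast}$-continuous and convex in $X$ on the compact convex set $\mathcal{X}^{\gamma_{\alpha}}$, and concave in $P$ on the convex set $\mathcal{P}$, yielding
\[
\inf\limits_{X\in\mathcal{X}^{\gamma_{\alpha}}}\sup\limits_{P\in\mathcal{P}}\big(E_{P}[X]-\rho_{1}^{\ast}(P)\big)=\sup\limits_{P\in\mathcal{P}}\inf\limits_{X\in\mathcal{X}^{\gamma_{\alpha}}}\big(E_{P}[X]-\rho_{1}^{\ast}(P)\big).
\]
The next step is to show the outer supremum over $\mathcal{P}$ is attained at some $P^{\ast}$. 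For this I would reproduce the argument of Lemma \ref{auxiliary-result}: take a maximizing sequence $\{P_{n}\}\subset\mathcal{P}$, use the constraint value $\inf_{X}(E_{P_{n}}[X]-\rho_{1}^{\ast}(P_{n}))\geq\alpha-\epsilon/2^{n}$ together with the bound $E_{P_{n}}[X]\leq M$ to deduce $\rho_{1}^{\ast}(P_{n})\leq M-\alpha+1\leq M-\rho_{1}(0)+1<u$, so that $\{G_{P_{n}}\}\subset\mathcal{G}_{u}$. Applying Koml\'{o}s and the $\mu$-a.e.\ closedness of $\mathcal{G}_{u}$ from Assumption \ref{assumption} gives a limit density $G^{\ast}=dP^{\ast}/d\mu\in\mathcal{G}_{u}$; the lower semicontinuity of $\rho_{1}^{\ast}$ (Lemma \ref{Fatou-property-1}) and an analogue of Lemma \ref{Fatou-property-2} for $P$ then show $P^{\ast}$ attains the supremum.

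Finally I would close the loop exactly as in the proof of Theorem \ref{minimax-result}. Having $\inf_{X\in\mathcal{X}^{\gamma_{\alpha}}}(E_{P^{\ast}}[X]-\rho_{1}^{\ast}(P^{\ast}))$ equal to the common minimax value $\alpha$, and using that any optimal test satisfies $\rho_{1}(X^{\ast})=\sup_{P}(E_{P}[X^{\ast}]-\rho_{1}^{\ast}(P))=\alpha$, I sandwich
\[
\inf\limits_{X\in\mathcal{X}^{\gamma_{\alpha}}}E_{P^{\ast}}[X]-\rho_{1}^{\ast}(P^{\ast})\leq E_{P^{\ast}}[X^{\ast}]-\rho_{1}^{\ast}(P^{\ast})\leq\sup\limits_{P\in\mathcal{P}}\big(E_{P}[X^{\ast}]-\rho_{1}^{\ast}(P)\big)=\alpha,
\]
forcing equality throughout and hence $E_{P^{\ast}}[X^{\ast}]=\inf_{X\in\mathcal{X}^{\gamma_{\alpha}}}E_{P^{\ast}}[X]$. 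The main obstacle I anticipate is verifying the weight-bound that confines the maximizing sequence to the level set $\mathcal{G}_{u}$: here the roles of $\inf$ and $\sup$ are interchanged compared with Lemma \ref{auxiliary-result} (we are minimizing $\rho_{1}$ rather than the concave value function), so I must check carefully that the relevant value is bounded below by $\rho_{1}(K_{1})$ or $\alpha$ and above by $M$, and that the constant $u$ from Assumption \ref{assumption} genuinely dominates $\rho_{1}^{\ast}(P_{n})$; the direction of these inequalities is the delicate point.
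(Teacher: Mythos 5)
Your proposal is correct and follows essentially the same route as the paper's proof: weak$^{\ast}$-compactness of the constraint set, the minimax theorem as in Lemma \ref{aux-minimax}, a Koml\'{o}s/closedness/lower-semicontinuity argument reproducing Lemma \ref{auxiliary-result} to extract $P^{\ast}$, and the same sandwich inequality to conclude, the paper's only cosmetic difference being the substitution $Y=K_{2}-X$, which lets it invoke Lemma \ref{auxiliary-result} verbatim. The delicate bound you flag does go through: since $K_{1}\geq 0$ and $\rho_{1}(K_{1})\leq\alpha$ by the standing assumption, monotonicity gives $\alpha\geq\rho_{1}(0)$, so indeed $\rho_{1}^{\ast}(P_{n})\leq M-\alpha+\epsilon\leq M-\rho_{1}(0)+1<u$ for $\epsilon<1$.
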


\begin{proof}
Set $Y=K_{2}-X$, $Y^{\ast}=K_{2}-X^{\ast}$ and
\[
\mathcal{Y}_{\gamma_{\alpha}}=\{Y:E_{Q^{\ast}}[Y]\leq\gamma_{\alpha},0\leq
Y\leq K_{2}-K_{1},Y\in L^{\infty}(\mu)\}.
\]
By Lemma \ref{dual-problem},
\[
\rho_{1}(K_{2}-Y^{\ast})=\inf\limits_{Y\in\mathcal{Y}_{\gamma_{\alpha}}}%
\rho_{1}(K_{2}-Y),
\]
i.e.,
\begin{equation}
\sup\limits_{P\in\mathcal{P}}\big(E_{P}[K_{2}-Y^{\ast}]-\rho_{1}^{\ast
}(P)\big)=\inf\limits_{Y\in\mathcal{Y}_{\gamma_{\alpha}}}\sup\limits_{P\in
\mathcal{P}}\big(E_{P}[K_{2}-Y]-\rho_{1}^{\ast}(P)\big). \label{3.4}%
\end{equation}
Applying similar analysis as in Lemma \ref{weak*-compact}, we obtain that
$\mathcal{Y}_{\gamma_{\alpha}}$ is compact in the topology $\sigma(L^{\infty
}(\mu),L^{1}(\mu))$. By the minimax theorem,
\begin{equation}
\inf\limits_{Y\in\mathcal{Y}_{\gamma_{\alpha}}}\sup\limits_{P\in\mathcal{P}%
}\big(E_{P}[K_{2}-Y]-\rho_{1}^{\ast}(P)\big)=\sup\limits_{P\in\mathcal{P}}%
\inf\limits_{Y\in\mathcal{Y}_{\gamma_{\alpha}}}\big(E_{P}[K_{2}-Y]-\rho
_{1}^{\ast}(P)\big). \label{3.5}%
\end{equation}

Now we prove that there exists a probability measure $P^{\ast}\in\mathcal{P}$
such that
\begin{equation}
\inf\limits_{Y\in\mathcal{Y}_{\gamma_{\alpha}}}\big(E_{P^{\ast}}[K_{2}%
-Y]-\rho_{1}^{\ast}(P^{\ast})\big)=\sup\limits_{P\in\mathcal{P}}%
\inf\limits_{Y\in\mathcal{Y}_{\gamma_{\alpha}}}\big(E_{P}[K_{2}-Y]-\rho
_{1}^{\ast}(P)\big). \label{3.6}%
\end{equation}
If we replace $X$ by $Y$, $\mathcal{X}_{\alpha}$ by $\mathcal{Y}%
_{\gamma_{\alpha}}$, $P$ by $Q$ and $\rho_{1}^{\ast}$ by $\rho_{2}^{\ast}$ in
(\ref{equation-3.5}), then (\ref{equation-3.5}) becomes (\ref{3.6}). Using the
same proof method as in Lemma \ref{auxiliary-result}, we deduce that
\ \ (\ref{3.6}) holds.

By (\ref{3.4}), (\ref{3.5}) and (\ref{3.6}),
\[
\inf\limits_{Y\in\mathcal{Y}_{\gamma_{\alpha}}}\big(E_{P^{\ast}}[K_{2}%
-Y]-\rho_{1}^{\ast}(P^{\ast})\big)=\sup\limits_{P\in\mathcal{P}}%
\big(E_{P}[K_{2}-Y^{\ast}]-\rho_{1}^{\ast}(P)\big).
\]
Since
\[%
\begin{array}
[c]{r@{}l}%
\inf\limits_{Y\in\mathcal{Y}_{\gamma_{\alpha}}}E_{P^{\ast}}[K_{2}-Y]-\rho
_{1}^{\ast}(P^{\ast})\leq & E_{P^{\ast}}[K_{2}-Y^{\ast}]-\rho_{1}^{\ast
}(P^{\ast})\\
\leq & \sup\limits_{P\in\mathcal{P}}\big(E_{P}[K_{2}-Y^{\ast}]-\rho_{1}^{\ast
}(P)\big),
\end{array}
\]
we have
\[
E_{P^{\ast}}[K_{2}-Y^{\ast}]-\rho_{1}^{\ast}(P^{\ast})=\inf\limits_{Y\in
\mathcal{Y}_{\gamma_{\alpha}}}E_{P^{\ast}}[K_{2}-Y]-\rho_{1}^{\ast}(P^{\ast
}).
\]
Thus,
\[
E_{P^{\ast}}[K_{2}-Y^{\ast}]=\inf\limits_{Y\in\mathcal{Y}_{\gamma_{\alpha}}%
}E_{P^{\ast}}[K_{2}-Y],
\]
i.e.,
\[
E_{P^{\ast}}[X^{\ast}]=\inf\limits_{X\in\mathcal{X}^{\gamma_{\alpha}}%
}E_{P^{\ast}}[X].
\]
This completes the proof.
\end{proof}

\begin{example}
\label{example-discrete} Consider the probability space $(\Omega
,\mathcal{F},\mu)$, where $\Omega$, $\mathcal{F}$ and $\mu$ are defined as the
same as in Example \ref{example-helpless}. Set $K_{1}=0$, $K_{2}=1$,
$\alpha=\ln(e+3)-2\ln2$, $\rho_{1}(X)=\ln E_{P_{0}}[e^{X}]$ and $\rho
_{2}(X)=E_{\mu}[X]$, where
\[
P_{0}(\omega)=\Bigg\{%
\begin{array}
[c]{l@{}c}%
\frac{1}{4}, & \quad\text{if }\omega=0,\\
\frac{3}{4}, & \quad\text{if }\omega=1.
\end{array}
\]
We solve problem (\ref{initial-problem}). It is easy to check that
\[
\inf\limits_{X\in\mathcal{X}_{\alpha}}E_{\mu}(1-X)=\frac{1}{2},
\]
i.e., $\gamma_{\alpha}=\frac{1}{2}$. By Lemma \ref{dual-problem}, to solve
problem (\ref{initial-problem}) is equivalent to solve the following problem:
\begin{equation}
\text{minimize}\quad\rho_{1}(X), \label{problem-initial-extend}%
\end{equation}
over the set $\mathcal{X}^{\gamma_{\alpha}}=\{X:E_{\mu}[X]\geq\frac{1}%
{2},0\leq X\leq1\}$. Let $X=x_{0}I_{\{0\}}+x_{1}I_{\{1\}}$, where $0\leq
x_{0},x_{1}\leq1$. If $X\in\mathcal{X}^{\gamma_{\alpha}}$, then $x_{0}%
\geq1-x_{1}$. Let $P=pI_{\{0\}}+(1-p)I_{\{1\}}$, where $0\leq p\leq1$. Then
\[
\rho_{1}^{\ast}(P)=E_{P_{0}}[\frac{dP}{dP_{0}}\ln\frac{dP}{dP_{0}}]=2\ln2+p\ln
p+(1-p)\ln(1-p)-(1-p)\ln3.
\]
When $p=\frac{e}{e+3}$, $\sup\limits_{P\in\mathcal{P}}\inf\limits_{X\in\mathcal{X}^{\gamma_{\alpha}}}%
E_{P}[X]-\rho_{1}^{\ast}(P)$ attains its maximum on $\mathcal{P}$. Thus,
\[
P^{\ast}=\frac{e}{e+3}I_{\{0\}}+\frac{3}{e+3}I_{\{1\}}\text{ \ and \ }X^{\ast
}=I_{\{0\}}.
\]

\end{example}

\subsection{Main result\label{main results}}

\begin{theorem}
\label{main-result} If $\rho_{1}$ and $\rho_{2}$ are convex expectations
continuous from below and Assumption \ref{assumption} holds, then there exist
$P^{\ast}\in\mathcal{P}$ and $Q^{\ast}\in\mathcal{Q}$ such that for any
optimal test $X^{\ast}$ of (\ref{initial-problem}), it can be expressed as
\begin{equation}
X^{\ast}=K_{2}I_{\{H_{Q^{\ast}}>zG_{P^{\ast}}\}}+BI_{\{H_{Q^{\ast}%
}=zG_{P^{\ast}}\}}+K_{1}I_{\{H_{Q^{\ast}}<zG_{P^{\ast}}\}},\quad\mu-a.e.,
\label{form}%
\end{equation}
where $z\in\lbrack0,+\infty)\cup\{+\infty\}$ and $B$ is a random variable
taking values in the random interval $[K_{1},K_{2}]$.
\end{theorem}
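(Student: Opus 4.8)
The plan is to show that, with $Q^{\ast}$ from Theorem \ref{minimax-result} and $P^{\ast}$ from Theorem \ref{Second-minimax-result}, every optimal test solves a classical (generalized) Neyman--Pearson problem between the two simple hypotheses $Q^{\ast}$ and $P^{\ast}$, and is therefore a likelihood-ratio test of the form (\ref{form}). Throughout I write $H=H_{Q^{\ast}}$ and $G=G_{P^{\ast}}$. First I would dispose of the degenerate case $\gamma_{\alpha}=0$: there the optimal test satisfies $X^{\ast}=K_{2}$, $Q^{\ast}$-a.e., so since $H\geq 0$ one may take $z=0$, any $P^{\ast}\in\mathcal{P}$, and $B:=X^{\ast}$ on $\{H=0\}$, which already yields (\ref{form}). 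Hence from now on I assume $\gamma_{\alpha}>0$.

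Set $c=E_{Q^{\ast}}[K_{2}]-\gamma_{\alpha}$. By Theorem \ref{minimax-result} every optimal test satisfies $E_{Q^{\ast}}[X^{\ast}]=c$, and by Lemma \ref{dual-problem} together with Theorem \ref{Second-minimax-result} it minimizes $E_{P^{\ast}}[X]$ over $\mathcal{X}^{\gamma_{\alpha}}=\{X:K_{1}\leq X\leq K_{2},\ E_{Q^{\ast}}[X]\geq c\}$, which is exactly a Neyman--Pearson problem for $Q^{\ast}$ versus $P^{\ast}$ with $[K_{1},K_{2}]$-valued randomized tests. The next step is to build the associated likelihood-ratio test: using that $z\mapsto E_{Q^{\ast}}\big[K_{2}I_{\{H>zG\}}+K_{1}I_{\{H\leq zG\}}\big]$ is nonincreasing, together with an intermediate-value argument in which the residual jump is absorbed by a measurable $B$ valued in $[K_{1},K_{2}]$ on $\{H=zG\}$, I would produce $z^{\ast}\in(0,+\infty]$ and $B$ so that
\[
\widehat{X}:=K_{2}I_{\{H>z^{\ast}G\}}+BI_{\{H=z^{\ast}G\}}+K_{1}I_{\{H<z^{\ast}G\}}
\]
satisfies $E_{Q^{\ast}}[\widehat{X}]=c$, hence $\widehat{X}\in\mathcal{X}^{\gamma_{\alpha}}$ with the constraint active. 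Here $z^{\ast}>0$ uses $\gamma_{\alpha}>0$, since $z^{\ast}=0$ would force $\widehat{X}=K_{2}$, $Q^{\ast}$-a.e., hence $\gamma_{\alpha}=0$.

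It then remains to identify $\widehat{X}$ as a minimizer and to show every optimal test coincides with it off the indifference set. By construction $(X-\widehat{X})(H-z^{\ast}G)\leq 0$ pointwise for every $X$ with $K_{1}\leq X\leq K_{2}$ (check the three regions $\{H>z^{\ast}G\}$, $\{H=z^{\ast}G\}$, $\{H<z^{\ast}G\}$ separately), so integrating against $\mu$ gives $E_{Q^{\ast}}[X-\widehat{X}]\leq z^{\ast}E_{P^{\ast}}[X-\widehat{X}]$; for $X\in\mathcal{X}^{\gamma_{\alpha}}$ the left-hand side is nonnegative, and $z^{\ast}>0$ forces $E_{P^{\ast}}[X]\geq E_{P^{\ast}}[\widehat{X}]$, so $\widehat{X}$ minimizes $E_{P^{\ast}}$ over $\mathcal{X}^{\gamma_{\alpha}}$. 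Now let $X^{\ast}$ be an arbitrary optimal test. Theorem \ref{Second-minimax-result} gives $E_{P^{\ast}}[X^{\ast}]=E_{P^{\ast}}[\widehat{X}]$, while $E_{Q^{\ast}}[X^{\ast}]=c=E_{Q^{\ast}}[\widehat{X}]$, whence $E_{\mu}\big[(X^{\ast}-\widehat{X})(H-z^{\ast}G)\big]=E_{Q^{\ast}}[X^{\ast}-\widehat{X}]-z^{\ast}E_{P^{\ast}}[X^{\ast}-\widehat{X}]=0$. Since the integrand is $\leq 0$ everywhere it must vanish $\mu$-a.e.; as $H-z^{\ast}G\neq 0$ off $\{H=z^{\ast}G\}$, this forces $X^{\ast}=K_{2}$ on $\{H>z^{\ast}G\}$ and $X^{\ast}=K_{1}$ on $\{H<z^{\ast}G\}$, $\mu$-a.e., while on $\{H=z^{\ast}G\}$ one has $K_{1}\leq X^{\ast}\leq K_{2}$, so taking $B:=X^{\ast}$ there exhibits $X^{\ast}$ in the form (\ref{form}).

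The hard part will be the construction of the pair $(z^{\ast},B)$ in the second step: one must analyze the law of the likelihood ratio $H/G$ under $Q^{\ast}$, absorb its atoms through the randomization $B$ on $\{H=z^{\ast}G\}$, and accommodate the random bounds $K_{1},K_{2}$ in place of the constants $0,1$ of the classical lemma, all while matching the constraint $E_{Q^{\ast}}[\widehat{X}]=c$ exactly. The boundary value $z^{\ast}=+\infty$ (no finite threshold meets the constraint, which occurs only when $Q^{\ast}$ and $P^{\ast}$ are sufficiently mutually singular) I would treat by a parallel direct argument: under the conventions $\{H>z^{\ast}G\}=\{G=0,\ H>0\}$ and $\{H=z^{\ast}G\}=\{G=0,\ H=0\}$ one has $\widehat{X}=K_{1}$, $P^{\ast}$-a.e., so its $E_{P^{\ast}}$-optimality is immediate, and the identity $E_{Q^{\ast}}[X^{\ast}]=c$ again pins down $X^{\ast}$ on $\{G=0,\ H>0\}$, giving (\ref{form}).
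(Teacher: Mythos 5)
Your reduction to a two-measure problem is exactly the paper's: you invoke Theorem \ref{minimax-result} to get $E_{Q^{\ast}}[X^{\ast}]=E_{Q^{\ast}}[K_{2}]-\gamma_{\alpha}$, and Lemma \ref{dual-problem} with Theorem \ref{Second-minimax-result} to make every optimal test a minimizer of $E_{P^{\ast}}$ over $\mathcal{X}^{\gamma_{\alpha}}$; your $\gamma_{\alpha}=0$ case with $z=0$ is the paper's case (ii) verbatim. Where you genuinely diverge is the last step. The paper does \emph{not} re-prove a Neyman--Pearson lemma with random bounds: it substitutes $Z=(K_{2}-X)/(K_{2}-K_{1})$ and performs the change of measure $\frac{d\hat{P}}{dP^{\ast}}=\frac{K_{2}-K_{1}}{E_{P^{\ast}}[K_{2}-K_{1}]}$, $\frac{d\hat{Q}}{dQ^{\ast}}=\frac{K_{2}-K_{1}}{E_{Q^{\ast}}[K_{2}-K_{1}]}$, which converts the problem into the classical one with constant bounds $[0,1]$; it then cites the classical generalized lemma (Cvitani\'{c}--Karatzas, or Theorem A.30 of F\"{o}llmer--Schied) for both the existence of the threshold $z^{\prime}$ and the completeness assertion that \emph{any} optimal test has the likelihood-ratio form, and finally translates back via $z=E_{Q^{\ast}}[K_{2}-K_{1}]/(z^{\prime}E_{P^{\ast}}[K_{2}-K_{1}])$. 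You instead re-derive the lemma directly in the $[K_{1},K_{2}]$ setting: the pointwise sign of $(X-\widehat{X})(H-z^{\ast}G)$ for optimality of $\widehat{X}$, and the vanishing of $E_{\mu}[(X^{\ast}-\widehat{X})(H-z^{\ast}G)]$ for completeness. Both routes are sound; yours is self-contained, exhibits a single threshold $z^{\ast}$ common to all optimal tests, and avoids the measure-change bookkeeping, but it must carry out the $(z^{\ast},B)$ construction that the paper deliberately outsources by normalizing to constant bounds.

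Two soft spots. First, the construction you defer is in fact routine and you should just do it: $g(z)=E_{Q^{\ast}}\big[K_{2}I_{\{H>zG\}}+K_{1}I_{\{H\leq zG\}}\big]$ is nonincreasing and right-continuous (monotone convergence, using $\bigcup_{z^{\prime}>z}\{H>z^{\prime}G\}=\{H>zG\}$), with $g(0)=E_{Q^{\ast}}[K_{2}]>c$; a jump at $z^{\ast}$ is filled linearly by $B=\lambda K_{2}+(1-\lambda)K_{1}$ on $\{H=z^{\ast}G\}$, so no analysis of atoms of $H/G$ beyond this is needed. Second, and more substantively, your $z^{\ast}=+\infty$ sub-case overclaims: the identity $E_{Q^{\ast}}[X^{\ast}]=c$ pins $X^{\ast}$ to $K_{2}$ on $\{G=0,H>0\}$ only when $g(\infty)=E_{Q^{\ast}}\big[K_{2}I_{\{G=0\}}+K_{1}I_{\{G>0\}}\big]$ equals $c$; if $g(\infty)>c$, the necessary conditions force only $X^{\ast}=K_{1}$ on $\{G>0\}$ plus the $E_{Q^{\ast}}$-identity, and an optimal test may lie strictly below $K_{2}$ on a $Q^{\ast}$-non-null part of $\{G=0,H>0\}$, so (\ref{form}) under your convention fails there. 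The paper's route handles exactly this situation as $z^{\prime}=0$ in the transformed problem, with the randomization $B^{\prime}$ living on \emph{all} of $\{G_{\hat{P}}=0\}=\{G=0\}$; in effect it reads $\{H_{Q^{\ast}}=zG_{P^{\ast}}\}$ for $z=+\infty$ as the whole set $\{G_{P^{\ast}}=0\}$ (the paper is itself terse about this convention). Adopt that reading of the equality set at $z^{\ast}=+\infty$ and your completeness argument closes in this boundary case as well.
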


\begin{proof}
We divide our proof into two cases:

i) The case $\gamma_{\alpha}>0$. By Theorem \ref{Second-minimax-result},
$X^{\ast}$ is the optimal test of the following problem:
\[
\text{minimize}\quad E_{P^{\ast}}[X],
\]
over the set $\mathcal{X}^{\gamma_{\alpha}}=\{X:E_{Q^{\ast}}[K_{2}%
-X]\leq\gamma_{\alpha},K_{1}\leq X\leq K_{2},X\in L^{\infty}(\mu)\}$. Set
\[
Z^{\ast}=\frac{K_{2}-X^{\ast}}{K_{2}-K_{1}},\, Z=\frac{K_{2}-X}{K_{2}-K_{1}%
},\,\gamma_{\alpha}^{\prime}=\frac{\gamma_{\alpha}}{E_{Q^{\ast}}[K_{2}-K_{1}%
]},\,\frac{d\hat{P}}{dP^{\ast}}=\frac{K_{2}-K_{1}}{E_{P^{\ast}}[K_{2}-K_{1}%
]}\,\text{and}\,\frac{d\hat{Q}}{dQ^{\ast}}=\frac{K_{2}-K_{1}}{E_{Q^{\ast}%
}[K_{2}-K_{1}]}.
\]
Then $Z^{\ast}$ is the optimal test of the problem:
\begin{equation}
\text{maximize}\quad E_{\hat{P}}[Z], \label{linear-form}%
\end{equation}
over the set $\mathcal{Z}_{\gamma_{\alpha}^{\prime}}=\{Z:E_{\hat{Q}}%
[Z]\leq\gamma_{\alpha}^{\prime},0\leq Z\leq1,Z\in L^{\infty}(\mu)\}$.

By the classical Neyman-Pearson lemma (see \cite{r1} or Theorem A.30 in
\cite{r12}), any optimal test $Z^{\ast}$ of (\ref{linear-form}) has the form
\begin{equation}
Z^{\ast}=I_{\{z^{\prime}H_{\hat{Q}}<G_{\hat{P}}\}}+B^{\prime}\cdot
I_{\{z^{\prime}H_{\hat{Q}}=G_{\hat{P}}\}},\quad\mu-a.e.
\end{equation}
for some constant $z^{\prime}\geq0$ and random variable $B^{\prime}$ taking
values in the interval $[0,1]$. Since
\[
\frac{d\hat{P}}{dP^{\ast}}=\frac{K_{2}-K_{1}}{E_{P^{\ast}}[K_{2}-K_{1}]}%
\quad\text{and}\quad\frac{d\hat{Q}}{dQ^{\ast}}=\frac{K_{2}-K_{1}}{E_{Q^{\ast}%
}[K_{2}-K_{1}]},
\]
if we take (with conventions $+\infty=\frac{1}{0}$ and $0=\frac{0}{0}$)
\[
B=K_{2}-(K_{2}-K_{1})B^{\prime}\quad\text{and}\quad z=\frac{E_{Q^{\ast}}%
[K_{2}-K_{1}]}{z^{\prime}E_{P^{\ast}}[K_{2}-K_{1}]},
\]
then $z\in(0, +\infty)\cup\{+\infty\}$ and
\begin{equation}
\label{representation-form}X^{\ast}=K_{2}I_{\{H_{Q^{\ast}}>zG_{P^{\ast}}%
\}}+BI_{\{H_{Q^{\ast}}=zG_{P^{\ast}}\}}+K_{1}I_{\{H_{Q^{\ast}}<zG_{P^{\ast}%
}\}},\quad\mu-a.e..
\end{equation}

ii) The case $\gamma_{\alpha}=0$. For this case, $X^{\ast}=K_{2}$, $Q^{\ast}%
$-a.e.. This is a special case of (\ref{representation-form}) when $z$ equals
$0$.
\end{proof}

In the next, we consider the case that $\rho_{1}$ and $\rho_{2}$ are two
convex expectations defined on $L^{1}(\mu)$. It is obvious that $\rho_{1}$ and
$\rho_{2}$ are also two convex expectations on $L^{\infty}(\mu)$. Thus, for
our problem (\ref{initial-problem}) we have the following corollary by Theorem
\ref{main-result}:

\begin{corollary}
\label{NP lemma-L1}If $\rho_{1}$ and $\rho_{2}$ are two convex expectations
defined on $L^{1}(\mu)$ space, then the optimal test of (\ref{initial-problem}%
) exists and has the form as in Theorem \ref{main-result}.
\end{corollary}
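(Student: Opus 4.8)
The plan is to obtain the corollary as a direct consequence of Theorem \ref{main-result}. Since $\rho_1$ and $\rho_2$ are convex expectations on $L^1(\mu)$, their restrictions to $L^\infty(\mu)$ are convex expectations on $L^\infty(\mu)$, as already observed after Definition \ref{DEF-convex expectation}. Problem (\ref{initial-problem}) itself is posed over $L^\infty(\mu)$ with $K_1,K_2\in L^\infty(\mu)$, so nothing in the optimization changes. Hence it suffices to verify that these restrictions satisfy the two hypotheses of Theorem \ref{main-result}, namely continuity from below and Assumption \ref{assumption}; once both hold, Theorem \ref{main-result} delivers the existence of the optimal test together with the representation (\ref{form}), which is exactly the assertion.

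For continuity from below, I would use the standard automatic-continuity fact that a finite, convex, monotone functional on the Banach lattice $L^1(\mu)$ is norm-continuous. Then, for any sequence $X_n\uparrow X$ with $X_n,X\in L^\infty(\mu)$, the domination $0\le X-X_n\le X-X_1\in L^1(\mu)$ and dominated convergence give $\|X-X_n\|_{L^1(\mu)}\to 0$; norm-continuity of $\rho_i$ on $L^1(\mu)$ then forces $\rho_i(X_n)\to\rho_i(X)$. This is precisely continuity from below on $L^\infty(\mu)$, so $\rho_1$ and $\rho_2$ fall within the scope of Lemmas \ref{Fatou-property-1} and \ref{weak*-compact}.

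For Assumption \ref{assumption}, the key input is that the density level sets $\mathcal{G}_c$ and $\mathcal{H}_c$ generated by a convex expectation on $L^1(\mu)$ are weakly compact in $L^1(\mu)$. I would combine this with Lemma \ref{Fatou-property-1}: that lemma already shows $\rho_i^{\ast}$ is lower semicontinuous along $\mu$-a.e. convergent sequences of densities \emph{whose limit is again a probability density}, i.e. lies in $\mathcal{M}$. The only way a $\mu$-a.e. limit of densities in a level set can fail to belong to $\mathcal{M}$ is through loss of mass, $\int H\,d\mu<1$. Weak compactness rules this out: by the Dunford--Pettis theorem it is equivalent to uniform integrability, so if $H_{Q_n}\to H$ $\mu$-a.e. with the $Q_n$ in a fixed level set, uniform integrability yields $\int H\,d\mu=\lim\int H_{Q_n}\,d\mu=1$, whence $H=H_{Q_0}$ for some $Q_0\in\mathcal{M}$. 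Lemma \ref{Fatou-property-1} then gives $\rho_2^{\ast}(Q_0)\le v$ (and likewise for $\rho_1^{\ast}$), so $\mathcal{G}_u$ and $\mathcal{H}_v$ are closed under $\mu$-a.e. convergence and Assumption \ref{assumption} holds.

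The main obstacle is this mass-preservation step, since on a general $L^\infty(\mu)$ convex expectation the $\mu$-a.e. limit of a sequence of $L^1$-densities may leak mass and escape $\mathcal{M}$, which is exactly what Assumption \ref{assumption} must postulate by hand. It is the weak compactness available in the $L^1(\mu)$ setting, together with Dunford--Pettis, that supplies the uniform integrability and closes this gap automatically; this is the point at which the $L^1$ case is genuinely easier and why it is treated only briefly here. Having verified both hypotheses, I would conclude by invoking Theorem \ref{main-result} to obtain the existence and the form (\ref{form}) of the optimal test, completing the proof.
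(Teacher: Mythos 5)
Your proposal is correct and takes essentially the paper's route: the paper obtains the corollary by viewing $\rho_{1},\rho_{2}$ as convex expectations on $L^{\infty}(\mu)$ and invoking Theorem \ref{main-result}, with the two hypotheses supplied exactly as you verify them --- finiteness on $L^{1}(\mu)$ yields norm (Lebesgue) continuity and hence continuity from below, while Assumption \ref{assumption} follows from uniform integrability of the penalty level sets together with Lemma \ref{Fatou-property-1}. Your Dunford--Pettis/mass-preservation argument is precisely the proof of Proposition \ref{a-result-from above} in the appendix (where the uniform integrability is taken from Theorem 3.6 of the cited Kaina--R\"{u}schendorf paper), so you have simply made explicit the verifications the paper leaves implicit.
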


\begin{example}
\label{main-example} Except $\rho_{2}(X)=\ln E_{Q_{0}}[e^{X}]$ where $Q_{0}$
is defined as in Example \ref{example-helpless}, all the notations in this
example are defined as the same as in Example \ref{example-discrete}. We solve
problem (\ref{initial-problem}).

Denote $\mathcal{Z}=\{X:0\leq X\leq1,E_{\mu}[X]\leq\frac{1}{2}\}$. By Example
\ref{example-discrete}, we have $\sup\limits_{X\in\mathcal{X}_{\alpha}}E_{\mu
}[X]=\frac{1}{2}$. Then $\mathcal{X}_{\alpha}\subset\mathcal{Z}$ and
\begin{equation}
\inf\limits_{X\in\mathcal{Z}}\rho_{2}(1-X)\leq\inf\limits_{X\in\mathcal{X}%
_{\alpha}}\rho_{2}(1-X). \label{au-example}%
\end{equation}
Take $\hat{X}=I_{\{0\}}$. By Example \ref{example-helpless},
\[
\rho_{2}(1-\hat{X})=\inf\limits_{X\in\mathcal{Z}}\rho_{2}(1-X).
\]
Since $\hat{X}\in\mathcal{X}_{\alpha}$, with (\ref{au-example}), we have
\[
\rho_{2}(1-\hat{X})=\inf\limits_{X\in\mathcal{X}_{\alpha}}\rho_{2}(1-X),
\]
which implies $I_{\{0\}}$ is the optimal test. Furthermore, if we take
$Q^{\ast}=\frac{3}{e+3}I_{\{0\}}+\frac{e}{e+3}I_{\{1\}}$ and $P^{\ast}%
=\frac{e}{e+3}I_{\{0\}}+\frac{3}{e+3}I_{\{1\}}$ as in Examples
\ref{example-helpless} and \ref{example-discrete}, then
\[
I_{\{0\}}=I_{\{\frac{3}{e}H_{Q^{\ast}}>G_{P^{\ast}}\}}.
\]

\end{example}

\section{Application}

In a financial market, if an investor does not have enough initial wealth,
then he may fail to (super-) hedge an contingent claim and will face some
shortfall risk. In this case, we need a criterion expressing the investor's
attitude towards the shortfall risk (see \cite{fo-le-1999,fo-le-2000,
fo-2002-b, r6}). F\"{o}llmer and Leukert \cite{fo-le-2000} use the expectation
of the shortfall weighted by the loss function as a shortfall risk measure. In
this section, we use a general measure, the convex risk measure, to evaluate
the shortfall and consequently minimize such a shortfall risk.

In more details, we adopt the same financial market model as in
\cite{fo-le-2000}. The discounted price process of the underlying asset is
described as a semimartingale $S=(S_{t})_{t\in\lbrack0,T]}$ on a complete
probability space $(\Omega,\mathcal{F},\mu)$. The information structure is
given by a filtration $F=\{\mathcal{F}_{t}\}_{0\leq t\leq T}$ with
$\mathcal{F}_{T}=\mathcal{F}$. Let $\mathcal{P}$ denote the set of equivalent
martingale measures. we assume that $\mathcal{F}_{0}$ is trivial and
$\mathcal{P\neq\emptyset}$. For an initial investment $X_{0}\geq0$ and a
portfolio process $\pi$ such that the wealth process%
\begin{equation}
X_{t}=X_{0}+\int_{0}^{t}\pi_{s}dS_{s}\;\;\forall t\in\lbrack0,T]
\label{wealth equation}%
\end{equation}
is well defined. A strategy $(X_{0},\pi)$ is called admissible if the
corresponding wealth process $X$ is nonnegative. For a given nonnegative
contingent claim $H\in L^{\infty}(\mu)$, we define that
\[
U_{0}=\underset{P\in\mathcal{P}}{\sup}E_{P}[H].
\]
It is well known that if the investor's initial wealth $\tilde{X}_{0}<U_{0}$,
then some shortfall $(H-X_{T})^{+}$ will occur at time $T$.

In this section, we introduce a general convex expectation $\rho$ to measure
the shortfall $(H-X_{T})^{+}$.

\begin{definition}
For a given convex expectation $\rho$, the shortfall risk is defined as
\[
\rho((H-X_{T})^{+})\text{.}%
\]

\end{definition}

Consequently, the investor wants to find an admissible strategy $(X_{0},\pi)$
which minimizes the shortfall risk and control his initial investment
$X_{0}\leq\tilde{X}_{0}$. Thus, we will solve the following optimization
problem:%
\begin{equation}%
\begin{array}
[c]{c}%
\underset{(X_{0},\pi)}{\text{min}}\;\rho((H-X_{T})^{+}),\\
\text{subject to }X_{0}\leq\tilde{X}_{0},
\end{array}
\label{convex risk optimization-1}%
\end{equation}
where $\tilde{X}_{0}$ is the initial wealth of the investor.

Now we show that the optimal $X_{T}^{\ast}$ must satisfy $0\leq X_{T}^{\ast
}\leq H$. In fact, if $P(X_{T}^{\ast}>H)>0$, we can construct a feasible
terminal wealth $\tilde{X}_{T}$ such that $0\leq\tilde{X}_{T}\leq H$ and
$(H-\tilde{X}_{T})^{+}<(H-X_{T}^{\ast})^{+}$. Thus, $\rho((H-\tilde{X}%
_{T})^{+})<\rho((H-X_{T}^{\ast})^{+})$ by the monotonicity property of $\rho$.
This leads to a contradiction.

Thus, without loss of generality we assume that $0\leq X_{T}\leq H$ and
(\ref{convex risk optimization-1}) becomes%
\begin{equation}%
\begin{array}
[c]{c}%
\underset{0\leq X_{T}\leq H}{\text{min}}\rho(H-X_{T}),\\
\text{subject to }\underset{P\in\mathcal{P}}{\sup}E_{P}[X_{T}]\leq\tilde
{X}_{0}.
\end{array}
\label{convex risk optimization-2}%
\end{equation}

By Theorem \ref{main-result} and the classical Neyman-Pearson lemma
(Proposition 4.1 in \cite{fo-le-2000}),%
\[
X_{T}^{\ast}=HI_{\{zH_{Q^{\ast}}>G_{P^{\ast}}\}}+BI_{\{zH_{Q^{\ast}%
}=G_{P^{\ast}}\}},\quad\mu-a.e.,
\]
where%
\[
z=\sup\{\tilde{z}\mid\int_{\{\tilde{z}H_{Q^{\ast}}>G_{P^{\ast}}\}}HdP^{\ast
}\leq\tilde{X}_{0}\}
\]
and%
\[
B=\left\{
\begin{array}
[c]{ll}%
\frac{\tilde{X}_{0}-\int_{\{zH_{Q^{\ast}}>G_{P^{\ast}}\}}HdP^{\ast}}%
{\int_{\{zH_{Q^{\ast}}=G_{P^{\ast}}\}}HdP^{\ast}}, & \;\;\text{when }P^{\ast
}[\{H>0\}\cap\{zH_{Q^{\ast}}=G_{P^{\ast}}\}]>0;\\
0, & \;\;\text{otherwise.}%
\end{array}
\right.
\]

Then by the optional decomposition theorem (see \cite{kra} and \cite{fo-kab}),
we obtain the optimal strategy $(\tilde{X}_{0},\pi^{\ast})$ corresponding to
$X_{T}^{\ast}$.

\begin{remark}
Instead of minimizing the convex risk measure under the initial investment
constraint, we can solve the following essentially equivalent problem: fix a
bound on the convex risk measure and minimize the initial investment.
\end{remark}

\begin{remark}
We assume that the given nonnegative contingent claim $H\in L^{\infty}(\mu)$.
If $H\in L^{1}(\mu)$, then we can use Theorem \ref{auxiliary-theorem} in the appendix.
\end{remark}

\section{Appendix}

In this appendix, we first prove that when the convex expectations are
continuous from above, Assumption \ref{assumption} holds naturally. Then an
example is given to show that Assumption \ref{assumption} is only a sufficient
condition for the existence of $Q^{\ast}$. Finally, we give the Neyman-Pearson
lemma for convex expectations on $L^{1}(\mu)$.

\begin{definition}
\label{continuous from above} We call a convex expectation $\rho$ is
continuous from above iff for any sequence $\{X_{n}\}_{n\geq1}\subset
L^{\infty}(\mu)$ decreases to some $X\in L^{\infty}(\mu)$, then $\rho
(X_{n})\to\rho(X)$.
\end{definition}

\begin{proposition}
\label{a-result-from above} If $\rho_{1}$ and $\rho_{2}$ are continuous from
above, then Assumption \ref{assumption} holds.
\end{proposition}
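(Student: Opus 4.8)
The plan is to prove the stronger statement that \emph{every} level set $\mathcal{G}_c$ and $\mathcal{H}_c$ is closed under $\mu$-a.e. convergence; Assumption \ref{assumption} then follows by taking any $u>\max\{0,M-\rho_1(0)+1\}$ and $v>\max\{0,M-\rho_2(0)+1\}$. By symmetry I only treat $\mathcal{H}_c$ for $\rho_2$, the argument for $\mathcal{G}_c$ and $\rho_1$ being identical. So I fix $c$, take $\{Q_n\}\subset\mathcal{Q}$ with $\rho_2^{\ast}(Q_n)\le c$, and suppose $H_{Q_n}\to g$ $\mu$-a.e.; the goal is to produce $Q_0\in\mathcal{Q}$ with $H_{Q_0}=g$ and $\rho_2^{\ast}(Q_0)\le c$.

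The proof splits into two parts. The easy part is lower semicontinuity: once the a.e.-limit $g$ is known to be the density of a probability measure $Q_0\in\mathcal{M}$, the computation in Lemma \ref{Fatou-property-1} yields $\rho_2^{\ast}(Q_0)\le\liminf_n\rho_2^{\ast}(Q_n)\le c$, so $g\in\mathcal{H}_c$. (That computation uses only invariance and monotone convergence, so it is available here even though Lemma \ref{Fatou-property-1} is stated under continuity from below.) Consequently everything reduces to the one genuinely nontrivial point: showing that no mass escapes in the limit, i.e. $E_\mu[g]=1$, so that $g$ is again a probability density. This is exactly where continuity from above of $\rho_2$ must enter, and it is the main obstacle.

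I would establish no mass escape by proving that $\{H_{Q_n}\}$ is uniformly integrable and then invoking Vitali's theorem (a.e. convergence plus uniform integrability gives $L^1$-convergence, whence $E_\mu[g]=\lim_n E_\mu[H_{Q_n}]=1$ and $g\ge0$, so $g=H_{Q_0}$ for some $Q_0\in\mathcal{M}$). Uniform integrability will be extracted from continuity from above in two steps. First, for fixed $\lambda>0$ I claim $\rho_2(\lambda I_A)\to\rho_2(0)$ as $\mu(A)\to0$: if this failed there would be sets $A_n$ with $\mu(A_n)\to0$ but $\rho_2(\lambda I_{A_n})\ge\rho_2(0)+\varepsilon$; passing to a subsequence with $\sum_n\mu(A_n)<\infty$ and setting $B_k=\bigcup_{n\ge k}A_n$, Borel--Cantelli gives $\lambda I_{B_k}\downarrow0$ $\mu$-a.e., so continuity from above forces $\rho_2(\lambda I_{B_k})\to\rho_2(0)$, while monotonicity gives $\rho_2(0)+\varepsilon\le\rho_2(\lambda I_{A_k})\le\rho_2(\lambda I_{B_k})$, a contradiction. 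Second, the representation of $\rho_2$ gives, for every $Q$ with $\rho_2^{\ast}(Q)\le c$,
\[
\lambda Q(A)-c\le E_Q[\lambda I_A]-\rho_2^{\ast}(Q)\le\rho_2(\lambda I_A),\qquad\text{so}\qquad \sup_n Q_n(A)\le\frac{\rho_2(\lambda I_A)+c}{\lambda}.
\]
Combining the two steps, given $\varepsilon'>0$ I first pick $\lambda$ so large that $(\rho_2(0)+1+c)/\lambda<\varepsilon'$ and then $\delta$ so small that $\mu(A)<\delta$ forces $\rho_2(\lambda I_A)<\rho_2(0)+1$; this gives $\sup_n Q_n(A)<\varepsilon'$ whenever $\mu(A)<\delta$, which together with $E_\mu[H_{Q_n}]=1$ is precisely uniform integrability.

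Finally I would assemble the pieces: uniform integrability and Vitali give $g=H_{Q_0}$ with $Q_0\in\mathcal{M}$; lower semicontinuity gives $\rho_2^{\ast}(Q_0)\le c$; hence $g\in\mathcal{H}_c$ and $\mathcal{H}_c$ is closed under $\mu$-a.e. convergence. The same reasoning applied to $\rho_1$ closes $\mathcal{G}_c$, and choosing admissible $u,v$ completes the proof. I expect the uniform-integrability step---the passage from continuity from above to control of $\rho_2(\lambda I_A)$ on sets of small measure, via Borel--Cantelli and the decreasing sets $B_k$---to be the crux; the rest is bookkeeping. An alternative to this hand argument is to quote the known equivalence between continuity from above of $\rho_2$ and weak compactness of the level sets of $\rho_2^{\ast}$ in $L^1(\mu)$ (Dunford--Pettis then yields uniform integrability at once), but I would prefer the self-contained route above.
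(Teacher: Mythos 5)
Your proof is correct, and its skeleton coincides with the paper's: first show that no mass escapes in the $\mu$-a.e.\ limit by establishing uniform integrability of the level set and applying Vitali's theorem, so that the limit is again a probability density; then conclude with lower semicontinuity of the penalty function via Lemma \ref{Fatou-property-1} --- and you are right that this lemma's proof uses only translation invariance and monotone convergence, so it applies even though its statement assumes continuity from below (the paper invokes it in the same way without comment). The genuine difference lies in how the uniform integrability is obtained: the paper simply cites Theorem 3.6 of Kaina and R\"uschendorf \cite{r13}, which asserts that continuity from above makes $\mathcal{G}_{u}$ uniformly integrable for $u>\max\{0,-\rho_{1}(0)\}$, whereas you reprove this fact from scratch. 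Your two-step mechanism --- the Borel--Cantelli/decreasing-sets argument showing $\rho_{2}(\lambda I_{A})\rightarrow\rho_{2}(0)$ as $\mu(A)\rightarrow0$ by continuity from above, combined with the elementary penalty inequality $\lambda Q(A)\leq\rho_{2}(\lambda I_{A})+c$ valid for all $Q$ with $\rho_{2}^{\ast}(Q)\leq c$ --- is sound (the passage from uniform absolute continuity plus $E_{\mu}[H_{Q_{n}}]=1$ to uniform integrability is standard via Markov's inequality) and is essentially the proof underlying the cited theorem and the Jouini--Schachermayer--Touzi/Dunford--Pettis equivalence you mention as an alternative. What your route buys is self-containedness and a slightly stronger conclusion (closedness of \emph{every} level set $\mathcal{H}_{c}$, $c\in\mathbb{R}$, not just those appearing in Assumption \ref{assumption}); what the paper's route buys is brevity.
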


\begin{proof}
We only show the result holds for $\rho_{1}$.

For any $u>\max\{0,M-\rho_{1}(0)+1\}$, we have $u>\max\{0,-\rho_{1}(0)\}$. By
Theorem 3.6 in \cite{r13}, $\mathcal{G}_{u}$ is uniformly integrable. For any
sequence $\{G_{P_{n}}\}_{n\geq1}\subset\mathcal{G}_{u}$ that converges to
$G_{\hat{P}}$, $\mu$-a.e., since $\{G_{P_{n}}\}_{n\geq1}$ is uniformly
integrable,%
\[
E_{\mu}[G_{\hat{P}}]=\lim\limits_{n\rightarrow\infty}E_{\mu}[G_{P_{n}}]=1,
\]
which shows $\hat{P}\in\mathcal{M}$. On the other hand, for any $u>\max
\{0,M-\rho_{1}(0)+1\}$, by Lemma \ref{Fatou-property-1}, we have
\[
\rho^{\ast}(\hat{P})\leq\liminf\limits_{n\rightarrow\infty}\rho^{\ast}%
(P_{n})\leq u.
\]
Then $G_{\hat{P}}\in\mathcal{G}_{u}$. Thus, $\mathcal{G}_{u}$ is closed under
the $\mu$-a.e. convergence.
\end{proof}

Now we show that even if Assumption (\ref{assumption}) does not hold, the
probability measure $Q^{\ast}$ may still exist.

\begin{example}
\label{example-continuous} Consider the probability space $(\Omega
,\mathcal{B},\mu)$, where $\Omega$ is the interval $[0,1]$, $\mathcal{B}$ is
the collection of all Borel sets in $[0,1]$ and $\mu$ is the Lebesgue measure
defined on $[0,1]$. Set $K_{1}=0$, $K_{2}=1$, $\alpha=\frac{3-e}{e-1}$,
$\rho_{1}(X)=E_{P}[X]$ and $\rho_{2}(X)=\ln E_{\mu}[e^{X}]$, where
\[
\frac{dP}{d\mu}=\Bigg\{%
\begin{array}
[c]{l@{}c}%
\frac{e+1}{e-1}, & \quad\omega\in\lbrack0,\frac{e-2}{e-1}],\\
\frac{3-e}{e-1}, & \quad\omega\in(\frac{e-2}{e-1},1].
\end{array}
\]
To solve problem (\ref{initial-problem}), one can check that\ Assumption
\ref{assumption} does not hold. Let
\[
X^{\ast}=I_{(\frac{e-2}{e-1},1]}\text{ \ and\ }\frac{dQ^{\ast}}{d\mu}=\Bigg\{%
\begin{array}
[c]{l@{}c}%
\frac{e}{e-1}, & \quad\omega\in\lbrack0,\frac{e-2}{e-1}],\\
\frac{1}{e-1}, & \quad\omega\in(\frac{e-2}{e-1},1].
\end{array}
\]
By the classical Neyman-Pearson lemma, through simple calculations, we can obtain that $X^{\ast}$ is
also the optimal test for discriminating between probability measures $P$ and
$Q^{\ast}$, i.e.,%
\[
E_{Q^{\ast}}[1-X^{\ast}]=\inf_{X\in\mathcal{X}_{\alpha}}E_{Q^{\ast}}[1-X].
\]
Furthermore, 
\[
\inf_{X\in\mathcal{X}_{\alpha}}E_{Q^{\ast}}[1-X]-\rho_{2}^{\ast}(Q^{\ast
})=\inf_{X\in\mathcal{X}_{\alpha}}\rho_{2}(1-X),
\]
\end{example}

If $K_{1}$ and $K_{2}$ belong to $L^{1}(\mu)$ such that $0\leq K_{1}<K_{2}$,
for two finite convex expectations $\rho_{1}$ and $\rho_{2}$, consider the
following problem:
\begin{equation}
\text{minimize}\quad\rho_{2}(K_{2}-X), \label{L-1-problem}%
\end{equation}
over the set $\mathcal{X}_{\alpha}=\{X:K_{1}\leq X\leq K_{2},\rho_{1}%
(X)\leq\alpha,X\in L^{1}(\mu)\}$. We find that similar ideas for solving
problem (\ref{Priliminary-1}) can be used to solve problem
(\ref{Priliminary-2}). So we obtain the following theorem and only give a
brief proof.

\begin{theorem}
\label{auxiliary-theorem} If $\rho_{1}$ and $\rho_{2}$ are two finite convex
expectations defined on $L^{1}(\mu)$ space, then the optimal test of
(\ref{L-1-problem}) exists and has the same form as in Theorem
\ref{main-result}.
\end{theorem}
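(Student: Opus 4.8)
The plan is to rerun, almost verbatim, the three-step program of Theorems \ref{existence}, \ref{minimax-result}, \ref{Second-minimax-result} and \ref{main-result}, with the only structural change being that the relevant topology on the feasible set switches from the weak$^{*}$ topology $\sigma(L^{\infty},L^{1})$ to the weak topology $\sigma(L^{1},L^{\infty})$. The enabling fact, which I would record first, is the dual representation for finite convex expectations on $L^{1}(\mu)$: since $(L^{1})^{*}=L^{\infty}$, the conjugate variables, together with monotonicity and cash invariance, force the representing densities $G_{P},H_{Q}$ to lie in $L^{\infty}(\mu)$, and standard results on convex risk measures on $L^{1}$ show that finiteness of $\rho_{1},\rho_{2}$ makes the penalty-function level sets $\mathcal{G}_{c},\mathcal{H}_{c}$ bounded and uniformly integrable. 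By the Dunford--Pettis theorem they are then relatively weakly compact in $L^{1}(\mu)$, and the lower semicontinuity of $\rho_{i}^{\ast}$ (the analogue of Lemma \ref{Fatou-property-1}) makes them weakly closed, hence weakly compact. This is exactly the feature that renders Assumption \ref{assumption} automatic here and lets me trade the Koml\'{o}s arguments for direct weak-compactness extractions, which is why only a brief proof is required.

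For existence (the analogue of Theorem \ref{existence}) I would note that a minimizing sequence $\{X_{n}\}\subset\mathcal{X}_{\alpha}$ lives in the order interval $[K_{1},K_{2}]$ with $K_{1},K_{2}\in L^{1}(\mu)$, which is uniformly integrable and hence relatively weakly compact; extracting a weakly convergent subsequence (or applying Koml\'{o}s exactly as in the proof of Theorem \ref{existence}) and invoking the lower semicontinuity of $\rho_{1}$ and $\rho_{2}$ yields a limit $X^{\ast}\in\mathcal{X}_{\alpha}$ attaining $\beta$. The same order-interval observation shows $\mathcal{X}_{\alpha}$ is weakly compact in $\sigma(L^{1},L^{\infty})$: it is convex, norm-closed by lower semicontinuity of $\rho_{1}$, and uniformly integrable, so Dunford--Pettis applies. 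This is the $L^{1}$ replacement for Lemma \ref{weak*-compact}, and it is cheaper because I no longer need the $\hat{\mathcal{X}}_{\alpha}$ identification.

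With $\mathcal{X}_{\alpha}$ weakly compact, the functional $(X,Q)\mapsto E_{Q}[K_{2}-X]-\rho_{2}^{\ast}(Q)=E_{\mu}[(K_{2}-X)H_{Q}]-\rho_{2}^{\ast}(Q)$ is weakly continuous and affine in $X$ (since $H_{Q}\in L^{\infty}$) and concave in $Q$, so the minimax theorem applies just as in Lemma \ref{aux-minimax}. I would then construct $Q^{\ast}$ as in Lemma \ref{auxiliary-result}: take $Q_{n}$ approaching the supremum, use the bound $\rho_{2}^{\ast}(Q_{n})\le v$ to place $H_{Q_{n}}\in\mathcal{H}_{v}$, extract a weakly convergent subsequence $H_{Q_{n_{k}}}\rightharpoonup H_{Q^{\ast}}$ with $Q^{\ast}\in\mathcal{Q}$ (weak convergence preserves $E_{\mu}[H]=1$, and weak closedness of $\mathcal{H}_{v}$ keeps $\rho_{2}^{\ast}(Q^{\ast})\le v$), and conclude by continuity in $X$ and lower semicontinuity of $\rho_{2}^{\ast}$ that $Q^{\ast}$ attains the value; this is the analogue of Theorem \ref{minimax-result}. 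Fixing $Q^{\ast}$ and setting $\gamma_{\alpha}=\inf_{X\in\mathcal{X}_{\alpha}}E_{Q^{\ast}}[K_{2}-X]$, Lemma \ref{dual-problem} transfers unchanged, reducing matters to minimizing $\rho_{1}(X)$ over $\mathcal{X}^{\gamma_{\alpha}}$; repeating the weak-compactness minimax on $\mathcal{Y}_{\gamma_{\alpha}}$ and $\mathcal{G}_{u}$ as in Theorem \ref{Second-minimax-result} produces $P^{\ast}\in\mathcal{P}$. A final appeal to the classical Neyman--Pearson lemma for the simple pair $(Q^{\ast},P^{\ast})$, exactly as in the proof of Theorem \ref{main-result}, then delivers the form (\ref{form}).

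The main obstacle I anticipate is entirely at the level of the functional-analytic bookkeeping: verifying that finiteness of $\rho_{1},\rho_{2}$ on $L^{1}(\mu)$ genuinely forces the representing densities into $L^{\infty}(\mu)$ and forces $\mathcal{G}_{c},\mathcal{H}_{c}$ to be uniformly integrable, so that Dunford--Pettis applies and $\sigma(L^{1},L^{\infty})$ is the correct pairing making every $E_{Q}[\,\cdot\,]$ functional continuous on the feasible set. Once these weak-compactness and lower-semicontinuity facts are secured, each step is a transcription of the $L^{\infty}$ proofs with ``$\sigma(L^{\infty},L^{1})$-compact'' replaced by ``$\sigma(L^{1},L^{\infty})$-compact'', and no further new idea is needed.
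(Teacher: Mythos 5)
Your proposal is correct and follows essentially the same route as the paper's deliberately brief proof: finiteness of $\rho_{1},\rho_{2}$ on $L^{1}(\mu)$ yields the dual representation with densities in $L^{\infty}(\mu)$ and uniformly integrable (hence weakly compact) penalty level sets, making the Assumption \ref{assumption}-type hypotheses automatic, after which the three-step program of Theorems \ref{existence}, \ref{minimax-result} and \ref{Second-minimax-result} transfers with $\sigma(L^{1},L^{\infty})$-compactness (via Dunford--Pettis) playing the role of Lemma \ref{weak*-compact}. The paper compresses exactly this into the remarks that finiteness implies Lebesgue-continuity (giving existence by repeating the proof of Theorem \ref{existence}) and that the weakly compact density sets reduce the difficulty so that the method of Section 4 applies verbatim, which is what you carry out in detail.
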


\begin{proof}
Since $\rho_{1}$ and $\rho_{2}$ are finite, then they are Lebesgue-continuous.
Repeating the proof of Theorem {\ref{existence}}, we will get the optimal test
exists. On the other hand, $\rho_{1}$ and $\rho_{2}$ can be represented by
some probability set $\mathcal{P}$ and $\mathcal{Q}$ for which the density
sets $\{G_{P}\in L^{\infty}(\mu):P\in\mathcal{P}\}$ and $\{H_{Q}\in L^{\infty
}(\mu):Q\in\mathcal{Q}\}$ are weakly compact. The property of this
representation reduces the difficulty of the problem. Then, the form in
Theorem \ref{main-result} can also be obtained by the same method as in
section 4. The detailed proofs are omitted.
\end{proof}

\end{document}